\title{On weighted Bergman spaces of a domain with Levi-flat boundary}
\author{Masanori Adachi}
\address{Department of Mathematics, Faculty of Science, Shizuoka University.  836 Ohya, Suruga-ku, Shizuoka 422-8529, Japan.}
\email{adachi.masanori@shizuoka.ac.jp}
\subjclass[2010]{Primary~32A36, Secondary~32A05, 32A25, 32N99, 32T27, 32W05, 33C20.}
\keywords{Weighted Bergman space, Levi-flat, $\opa$-equation, Bergman kernel, generalized hypergeometric function}
\date{\today}
\newtheorem*{MainTheorem}{Main Theorem}
\newtheorem*{Theorem*}{Theorem}
\newtheorem*{Conjecture*}{Conjecture}
\newtheorem*{Claim}{Claim}
\newtheorem{Theorem}{Theorem}[section]
\newtheorem{Proposition}[Theorem]{Proposition}
\newtheorem{Lemma}[Theorem]{Lemma}
\newtheorem{Corollary}{Corollary}
\theoremstyle{remark}
\newcommand\C{\mathbb{C}}  
\newcommand\N{\mathbb{N}}
\newcommand\D{\mathbb{D}}
\newcommand\Beta{\mathrm{B}}
\newcommand\PP{\mathbb{P}}
\newcommand\Ker{\operatorname{Ker}}
\newcommand\Aut{\operatorname{Aut}}
\newcommand{\bd}{\partial}
\newcommand{\pa}{\partial}
\newcommand{\opa}{\overline\pa}
\newcommand{\ol}{\overline }
\begin{document}

\maketitle

\begin{abstract}
The aim of this study is to understand to what extent a 1-convex domain with Levi-flat boundary is capable of holomorphic functions with slow growth.
This paper discusses a typical example of such domain, the space of all the geodesic segments on a hyperbolic compact Riemann surface. 
Our main finding is an integral formula that produces holomorphic functions on the domain from holomorphic differentials on the Riemann surface.
This construction can be seen as a non-trivial example of $L^2$ jet extension of holomorphic functions with optimal constant.  
As its corollary, it is shown that the weighted Bergman spaces of the domain is infinite dimensional for any weight order greater than $-1$ in spite of the fact that the domain does not admit any non-constant bounded holomorphic functions. 
\end{abstract}

\section{Introduction}
Denote by $\D$ the unit disk in $\C$, and let $\Sigma = \D/\Gamma$ be a compact Riemann surface of genus $\geq 2$ where $\Gamma$ is a Fuchsian group $< \Aut(\D) = PSU(1,1)$. We consider a quotient of the bidisk by $\Gamma$, $\Omega := \D \times \D / \Gamma$, 
where $\Gamma$ acts on $\D \times \D$ diagonally, 
\[
\gamma \cdot (z, w) = (\gamma z, \gamma w)
\]
for each $\gamma \in \Gamma$ and $(z, w) \in \D \times \D$. 
The purpose of this paper is to give a concrete topological basis for the space of holomorphic functions on $\Omega$, denoted by $\mathcal{O}(\Omega)$.

\bigskip

There are several reasons to explore concrete topological basis of holomorphic functions on $\Omega$.
From geometric viewpoint, the complex manifold $\Omega$ is identified with the space of all the geodesic segments on $\Sigma$ with respect to the Poincar\'e metric: we can identify a point $[(z, w)] \in \D \times \D/\Gamma$ with a geodesic segment on $\Sigma$ given by
the projection of unique geodesic connecting $z$ and $w$ in the Poincar\'e disk $\D$.
It would be a natural question to describe holomorphic functions on the space of geodesic segments on $\Sigma$.

In view of the theory of automorphic forms, our goal is nothing but to describe the space of $\Gamma$-invariant holomorphic functions $\mathcal{O}(\D \times \D)^\Gamma$ on the bidisk $\D \times \D$. 
Although this would also be a natural question, automorphic forms on the bidisk 
with respect to actions by Fuchsian groups have not been well studied in the literature.

The complex manifold $\Omega$ is also a natural object in the study of holomorphic functions of several complex variables. 
We may regard $\Omega$ as a locally-trivial holomorphic $\D$-bundle over $\Sigma$ by its first projection $\pi|\Omega\colon \Omega \to \Sigma$, and embed $\Omega$ as a relatively compact domain in a holomorphic $\C\PP^1$-bundle over $\Sigma$ denoted by $X := \D \times \C\PP^1/\Gamma$ where $\Gamma$ acts on $\D \times \C\PP^1$ diagonally again. 
Then, we can see that the boundary $\bd\Omega$ of $\Omega$ in $X$, which is a real-analytic closed real hypersurface, has a special property, possessing a foliation by Riemann surfaces. 
Such a three-dimensional real hypersurface is called a {Levi-flat hypersurface}, and of great interest in connection with the Levi problem on complex manifolds and the problem on exceptional minimal sets of holomorphic foliations.
This $\bd\Omega$, which is diffeomorphic to the unit tangent bundle of $\Sigma$ and whose foliation agrees with the weakly stable foliation of the geodesic flow on $\Sigma$,  is known as a model example of Levi-flat hypersurface (see, for instance, \cite{brunella2010,diederich-ohsawa1985, ohsawa-book, ohsawa-kias}).

\bigskip

If we apply a general theory on holomorphic functions of several complex variables, 
we can easily confirm the existence of non-constant holomorphic functions on $\Omega$. 
Diederich and Ohsawa \cite{diederich-ohsawa1985} showed that $\Omega$ is 1-convex
by observing that 
\[
- \log \left( 1 - \left| \frac{w - z}{1 - \ol{z}w} \right| \right)
\]
is a plurisubharmonic exhaustion on $\Omega$ which is strictly plurisubharmonic on $\Omega \setminus D$,
where $D$ is the quotient of the diagonal set $\Delta \subset \D \times \D$ by $\Gamma$.
From the solution to the Levi problem by Grauert \cite{grauert}, the existence of such an exhaustion function
implies holomorphically convexity of $\Omega$, and moreover that any distinct points in $\Omega \setminus D$
are separated by holomorphic functions on $\Omega$.
Hence, $\Omega$ carries a plenty of holomorphic functions, in particular, those having singularities at points in the boundary $\bd\Omega$.

However, we have never seen any concrete holomorphic functions on $\Omega$ 
except for constant functions and those given by a Poincar\'e series
\[
\sum_{\gamma \in \Gamma} (\gamma(z) - \gamma(w))^{N}
\]
where $N \geq 2$ (Ohsawa \cite{ohsawa-kias}). 
Also, it has not been clear whether $\Omega$ admits a non-constant holomorphic function with slow growth, 
namely, those belonging to the weighted Bergman space $A^2_\alpha(\Omega)$
of weight order $\alpha > -1$  (for its definition, see \S\ref{Bergman space}).

\bigskip
The goal of this paper is to give a concrete description of $\mathcal{O}(\Omega)$, and to show the existence of 
holomorphic functions with slow growth:
\begin{MainTheorem}
We have an injective linear map
\[
I\colon \bigoplus_{N=0}^\infty H^0(\Sigma, K_\Sigma^{\otimes N}) \hookrightarrow \bigcap_{\alpha > -1} A^2_\alpha(\Omega) \subset \mathcal{O}(\Omega) 
\]
having dense image in $\mathcal{O}(\Omega)$ equipped with compact open topology and expressed by
\[
I(\psi)(z, w) =
\begin{cases}
\displaystyle \frac{1}{\Beta(N,N)} \int_{\tau \in \overline{zw}} \frac{\psi(\tau) (d\tau)^{\otimes N}}{[w, \tau, z]^{\otimes (N-1)}} & \text{for $N \geq 1$,} \\
\text{the constant $\psi$}  &\text{for $N = 0$}\\
\end{cases}
\]
as a function in $\mathcal{O}(\D \times \D)^\Gamma$ for $\psi = \psi(\tau) (d\tau)^{\otimes N} \in H^0(\Sigma, K_\Sigma^{\otimes N}) \subset H^0(\D, K_\D^{\otimes N})$ where $\tau$ is the coordinate of $\D$, the universal cover of $\Sigma$.
Here we denoted
\[
[w, \tau, z] := \frac{(w -z)  d\tau} {(w - \tau) (\tau - z)},
\]
an $\Aut(\D)$-invariant meromorphic 1-form in $\tau$ on $\D$, and $\Beta(p,q)$ is the beta function. 
\end{MainTheorem}

\begin{Corollary}
\label{non-vanishing}
The weighted Bergman space $A^2_\alpha(\Omega)$ is infinite dimensional for any $\alpha > -1$.
\end{Corollary}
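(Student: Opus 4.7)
The corollary is essentially a direct consequence of the Main Theorem, so the plan is to set up the reduction carefully and then carry out a purely cohomological computation on $\Sigma$.

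First I would fix $\alpha > -1$ and invoke the Main Theorem, which yields an injective linear map
\[
I\colon \bigoplus_{N=0}^\infty H^0(\Sigma, K_\Sigma^{\otimes N}) \hookrightarrow \bigcap_{\beta > -1} A^2_\beta(\Omega) \subset A^2_\alpha(\Omega).
\]
By injectivity,
\[
\dim_\C A^2_\alpha(\Omega) \geq \dim_\C R(\Sigma) = \sum_{N=0}^\infty h^0(\Sigma, K_\Sigma^{\otimes N}),
\]
so the problem reduces to showing that the canonical ring $R(\Sigma) = \bigoplus_{N=0}^\infty H^0(\Sigma, K_\Sigma^{\otimes N})$ is infinite dimensional.

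For the second step I would invoke Riemann--Roch together with Serre duality. Since $\Sigma = \D/\Gamma$ is a hyperbolic compact Riemann surface, its genus satisfies $g \geq 2$, so $K_\Sigma$ has degree $2g - 2 \geq 2$ and $K_\Sigma^{\otimes N}$ has degree $N(2g-2)$. For $N \geq 2$ the bundle $K_\Sigma^{\otimes (1-N)}$ has negative degree, hence $H^1(\Sigma, K_\Sigma^{\otimes N}) \cong H^0(\Sigma, K_\Sigma^{\otimes(1-N)})^\vee = 0$ by Serre duality. Riemann--Roch then gives
\[
h^0(\Sigma, K_\Sigma^{\otimes N}) = N(2g-2) - g + 1 = (2N-1)(g-1),
\]
which tends to $\infty$ with $N$. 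Thus $R(\Sigma)$ is infinite dimensional, and the corollary follows for every $\alpha > -1$.

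There is no real obstacle here once the Main Theorem is granted: its substantive content, namely the injectivity of $I$ and the placement of its image inside $A^2_\alpha(\Omega)$ for every $\alpha > -1$, is exactly what makes this deduction immediate. The only point worth flagging is that one genuinely needs the injectivity of $I$ on the \emph{full} direct sum rather than on each graded piece $H^0(\Sigma, K_\Sigma^{\otimes N})$ separately, so that functions produced from distinct values of $N$ cannot cancel against each other in $\mathcal{O}(\Omega)$; this direct-sum injectivity should come for free from the proof of the Main Theorem, since $I$ sends $H^0(\Sigma, K_\Sigma^{\otimes N})$ to the optimal $L^2$-jet extensions of sections of $\mathcal{I}_D^N/\mathcal{I}_D^{N+1}$, which sit in distinct filtration layers along $D$.
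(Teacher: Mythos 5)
Your proposal is correct and follows exactly the route the paper intends: the corollary is an immediate consequence of the Main Theorem, since the injective map $I$ lands in $\bigcap_{\beta>-1}A^2_\beta(\Omega)\subset A^2_\alpha(\Omega)$ and the canonical ring $R(\Sigma)$ of a genus $g\geq 2$ surface is infinite dimensional (your Riemann--Roch computation $h^0(\Sigma,K_\Sigma^{\otimes N})=(2N-1)(g-1)$ for $N\geq 2$ is the standard way to see this). Your remark about needing injectivity on the full direct sum is well taken, and the paper does establish precisely that, by showing the images $I(H^0(\Sigma,K_\Sigma^{\otimes N}))$ for distinct $N$ are mutually orthogonal in $L^2_0(\Omega)$.
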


Note that the canonical ring $R(\Sigma) := \bigoplus_{N=0}^\infty H^0(\Sigma, K_\Sigma^{\otimes N})$,
the domain of the operator $I$, is identified with 
the graded ring $\bigoplus_{N=0}^\infty H^0(D, \mathcal{I}_D^N/\mathcal{I}_D^{N+1})$ associated with
the filtered ring of jets of holomorphic functions along $D$, where $\mathcal{I}_D$ denotes the ideal sheaf of $D$. 
The main idea for the proof of Main Theorem is to extend jets of holomorphic functions belonging to $H^0(D, \mathcal{I}_D^N/\mathcal{I}_D^{N+1})$ to holomorphic functions on $\Omega$ so with minimal $L^2$ norm.
Our achievement is not only to construct such an optimal $L^2$ jet extension operator $I$ in our case, 
but also to obtain its explicit formula, and exact values of weighted $L^2$ norms of holomorphic functions extended by $I$ in terms of generalized hypergeometric functions.

Recently, general $L^2$ jet extension theorems of Ohsawa--Takegoshi type have been actively investigated (cf. \cite{popovici,hisamoto, demailly, hosono, cao-demailly-matsumura}).
However, none of them seems to be able to yield our optimal extension operator. 
We hope that our work would give some clue to pursue the general $L^2$ jet extension theorem with optimal constant.

\bigskip

Another possible approach to construct holomorphic functions on $\Omega$ with slow growth 
would be to use the $L^2$ extension theorem of Ohsawa--Takegoshi type. 
In \cite[Theorem 3.6]{guan-zhou} (cf. \cite{ohsawa-vi}), it was shown that, 
for any relatively compact domain $D$ with smooth strongly pseudoconvex boundary in a Stein manifold, 
there exists a continuous linear extension operator, which gives a gain in weight order, 
\[
I_H \colon A^2_{\alpha+1}(D \cap H) \to A^2_{\alpha}(D). 
\]
where $H$ is a closed smooth complex hypersurface in $\Omega$ and $\alpha \geq -1$.
This work suggests extending weighted $L^2$ holomorphic functions on a fiber of 
$\pi|\Omega\colon \Omega \to \Sigma$ to those on $\Omega$.
However, again, existing $L^2$ extension theorems seem not be able to produce holomorphic functions on $\Omega$ with slow growth as in our Main Theorem. 
The biggest difficulty in this approach is the lack of weighted $L^2$ estimate for the $\opa$-equation on weakly pseudoconvex domains in weight order $\alpha$ close to  $-1$. 
Although some useful Donnelly--Fefferman type estimates are available in weight order $\alpha > -\eta$, where $\eta$ is the Diederich--Forn{\ae}ss index of the pseudoconvex domain  (cf. \cite{berndtsson-charpentier, kohn1999, henkin-iordan, cao-shaw-wang, pinton-zampieri})), they are not satisfactory from our viewpoint
since it is known that the Diederich--Forn{\ae}ss indices of Levi-flat bounded domains are at most $1/2$ (cf. \cite{fu-shaw2014, adachi-brinkschulte2014}).
We do not know how to solve the $\opa$-equation on our $\Omega$, which is known to have the Diederich--Forn{\ae}ss index exactly $1/2$ (cf. \cite{adachi}), in weighted $L^2$ spaces of order $\alpha \in (-1, -1/2]$.

In this context, we would like to remark a work of Chen \cite{chen2014}, which motivated this work. He studied the weighted Bergman spaces of pseudoconvex domains in Euclidean spaces $\C^n$ with $C^2$ smooth boundary, and proved a H\"ormader type $L^2$ estimate for the $\opa$-equation
in weighted $L^2$ spaces of any weight order $\alpha > -1$ regardless of their Diederich--Fornaess indices. 
This affirmative result lead us to construct holomorphic functions on $\Omega$ with slow growth,
although the technical difficulty forced us to take a different approach from extending holomorphic functions on a fiber to those on $\Omega$.

\bigskip

As an application of our description of $\mathcal{O}(\Omega)$, we shall give an alternate proof for the following classical fact dated back to Hopf \cite{hopf} (cf. \cite{tsuji, sullivan, garnett, feres-zeghib}):
\begin{Corollary}
\label{cor:hopf}
The Hardy space $A^2_{-1}(\Omega)$ consists of constant functions only.
In particular, there is no bounded holomorphic function on $\Omega$ except constant functions. 
\end{Corollary}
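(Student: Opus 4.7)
The plan is to exploit the optimality of the $L^2$-jet extension operator $I$ together with the explicit ${}_3F_2$ formula for $\|I(\psi)\|^2_{A^2_\alpha}$ advertised in the abstract. As a preliminary reduction, since $D\cong\Sigma$ is compact and connected, the restriction $f|_D$ of any $f\in A^2_{-1}(\Omega)$ is automatically a constant $c$; it therefore suffices to prove that $f\in A^2_{-1}(\Omega)$ with $f|_D\equiv 0$ must vanish identically, and then apply this to $f-c$.

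To handle such an $f$, I would argue inductively on $N\geq 1$ that its $N$-th jet $j^N f$, viewed as an element of $H^0(D,\mathcal{I}_D^N/\mathcal{I}_D^{N+1})\cong H^0(\Sigma, K_\Sigma^{\otimes N})$, vanishes. Assuming all lower jets vanish, set $\psi := j^N f$ and invoke the optimality of $I(\psi)$ as the $L^2$-jet extension of $\psi$, which gives
\[
\|I(\psi)\|_{A^2_\alpha}\ \leq\ \|f\|_{A^2_\alpha}\qquad\text{for every }\alpha>-1.
\]
The right-hand side is under control as $\alpha\searrow -1$ because $f$ lies in the Hardy space; the plan is to use the ${}_3F_2$ expression for $\|I(\psi)\|^2_{A^2_\alpha}$ to show that the left-hand side blows up strictly faster whenever $\psi\neq 0$. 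This divergence is fully consistent with the Main Theorem, which only places $I(\psi)$ inside $\bigcap_{\alpha>-1}A^2_\alpha(\Omega)$ and, for $N\geq 1$, never inside $A^2_{-1}(\Omega)$; the case $N=0$, where $I(\psi)$ is the constant $\psi$, is precisely where no such divergence occurs.

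Once every higher jet of $f$ vanishes along $D$, the function $f$ belongs to $\bigcap_{N\geq 1}\mathcal{I}_D^N$, and since $\Omega$ is connected and $D$ a hypersurface, $f\equiv 0$ follows. Together with the preliminary reduction this yields the first statement of the Corollary. The second statement is then immediate from the inclusion of bounded holomorphic functions in $A^2_{-1}(\Omega)$, valid because $\bd\Omega$ is compact.

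The principal obstacle is the asymptotic analysis of the ${}_3F_2$ norm formula as $\alpha\searrow -1$: one must compare its singular behaviour with that of a Hardy integral and show strict domination for every nonzero $\psi\in H^0(\Sigma,K_\Sigma^{\otimes N})$ with $N\geq 1$. Here the delicate boundary behaviour of the generalized hypergeometric function at unit argument, notably Saalsch\"utz-type identities, will be essential.
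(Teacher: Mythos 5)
Your strategy is sound and, once unpacked, runs on the same engine as the paper's own proof, though you package it differently: the paper works directly at $\alpha=-1$, bounding $\|f\|_{-1}^2=\pi\sum_n\|\varphi_n\|^2$ from below termwise by the norms of the minimal solution and observing that these decay like $C/m$ (harmonic divergence), whereas you stay at $\alpha>-1$, compare $\|I(\psi)\|_\alpha$ with $\|f\|_\alpha$, and let $\alpha\searrow-1$. The limiting argument does work: by Lemma \ref{norm} the weights $\Gamma(n+1)/\Gamma(n+2+\alpha)$ are uniformly bounded for $\alpha\in(-1,0)$ and tend to $1$, so $\sup_{\alpha\in(-1,0)}\|f\|_\alpha^2\lesssim\|f\|_{-1}^2<\infty$, while $\|I(\psi)\|_\alpha^2=\pi\|\psi\|^2c_{N,\alpha}\to\infty$ by monotone convergence, since at $\alpha=-1$ the ${}_3F_2$ degenerates to ${}_2F_1(N,N;2N;1)$, which diverges because its parametric excess is $0$ --- equivalently, because $\|\varphi_{N+m}\|^2\sim C/m$ by the Stirling computation already carried out in the proof of Proposition \ref{holomorphic}. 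So no Saalsch\"utz-type identity is needed; the ``principal obstacle'' you flag is in fact the easy part of the argument.

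The one step you should not treat as a black box is the inequality $\|I(\psi)\|_\alpha\le\|f\|_\alpha$. The minimality established in Proposition \ref{estimate} is only stepwise: each term of $\widehat{I}(\psi)$ is the $L^2$-minimal solution of its own $\opa$-equation \emph{given the previous term}, and this does not formally imply that $I(\psi)$ minimizes the global $A^2_\alpha(\Omega)$-norm among all holomorphic extensions of the jet $\psi$, since altering $\varphi_{N+m}$ alters the constraint imposed on $\varphi_{N+m+1}$. What rescues the claim is the identity $\widehat{I}(\varphi_N)=\{\Pi^{(0)}_{n,E_{N,n-N}}\varphi_n\}$ proved by induction in the proof of Proposition \ref{density}: the associated differentials of the minimal extension are exactly the eigenspace projections of those of $f$, whence $\|\Pi^{(0)}_{n,E_{N,n-N}}\varphi_n\|\le\|\varphi_n\|$ for every $n$ and your inequality follows termwise from Lemma \ref{norm}. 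This projection identity is precisely what the paper's own proof of the Corollary invokes, so your argument becomes complete only after importing that lemma explicitly. The remaining reductions (constancy of $f|_D$, the identity theorem for a function vanishing to infinite order along $D$ in the connected $\Omega$, and the inclusion of bounded functions in $A^2_{-1}$ via the fiberwise Hardy inequality and the finite volume of $\Sigma$) are all fine.
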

Our new proof does not use any property of $\bd\Omega$
although previous proofs relied on the ergodicity of the Levi foliation on the boundary $\bd\Omega$.

As an another application, we shall give a Forelli--Rudin construction (cf. \cite{forelli-rudin, ligocka}) 
for weighted Bergman kernels of $\Omega$. 
\begin{Corollary}
\label{cor:ligocka}
For $\alpha > -1$, the weighted Bergman kernel $B_\alpha((z,w); (z',w'))$ of $A^2_\alpha(\Omega)$ (see \S\ref{Bergman space}
for the choice of measure and weight) has the following expression
\begin{align*}
&B_\alpha((z, w); (z', w')) \\
&= \frac{\Gamma(\alpha+2)}{\pi^2 (4g-4)} + \frac{1}{\pi} \sum_{N=1}^\infty \frac{1}{c_{N,\alpha}} \frac{1}{\Beta(N,N)^2}\int_{\tau \in \overline{zw}} \int_{\tau' \in \overline{z'w'}} \frac{B_N(\tau, \tau') (d\tau \otimes \overline{d\tau'})^{\otimes N}}{([w, \tau, z] \otimes \overline{[w', \tau', z']})^{\otimes (N-1)}}
\end{align*}
where  $g$ is the genus of $\Sigma$, $B_N(\tau,\tau') (d\tau \otimes \overline{d\tau'})^{\otimes N}$  is the Bergman kernel
of $K_\Sigma^{\otimes N}$, and
\[
c_{N,\alpha} := \frac{\Gamma(N+1)}{\Gamma(N+2+\alpha)} {}_3F_2 \left(\begin{matrix}
N+1, N, N \\
2N, N+2+\alpha
\end{matrix}
; 1
\right).
\]
Here $\Gamma(z)$ and ${}_3F_2(\cdots ; z)$ denote the gamma function and the generalized hypergeometric function respectively. 
\end{Corollary}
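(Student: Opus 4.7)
The strategy is to use the Main Theorem to realize $A^2_\alpha(\Omega)$ as an orthogonal Hilbert direct sum $\bigoplus_{N=0}^\infty I\bigl(H^0(\Sigma, K_\Sigma^{\otimes N})\bigr)$, and then express $B_\alpha$ as the corresponding sum of component reproducing kernels. I would first upgrade the density statement in the Main Theorem (which is for the compact-open topology) to density in the Hilbert space $A^2_\alpha(\Omega)$: since point evaluations are continuous on weighted Bergman spaces, a uniform-on-compacta approximation of a given $A^2_\alpha$-function, combined with a dominated-convergence / monotone truncation argument against the weighted volume, yields $A^2_\alpha$-approximation.

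Next I would establish the two orthogonality statements that underpin the decomposition. For $N \neq M$, the function $I(\psi_N)$ vanishes to order exactly $N$ along the diagonal $D$, and the $U(1)$-action on the normal direction to $D$ inherited from the isotropy subgroup of $\Aut(\D)$ preserves $\Omega$ and the measure defining $A^2_\alpha(\Omega)$; the two components transform under distinct characters of this $U(1)$, which forces $\langle I(\psi_N), I(\psi_M)\rangle_{A^2_\alpha} = 0$. Within a fixed $N \geq 1$, the ``optimal $L^2$-jet extension'' property claimed in the Main Theorem, once polarized, gives an identity of the form
\[
\langle I(\psi), I(\varphi)\rangle_{A^2_\alpha(\Omega)} \;=\; c_{N,\alpha}\, \langle \psi, \varphi\rangle_{L^2(\Sigma,\, K_\Sigma^{\otimes N})}
\]
with the \emph{same} constant $c_{N,\alpha}$ for every pair of sections, because the weight depends only on the Poincar\'e-invariant quantity $|(w-z)/(1-\bar z w)|$ while the map $I$ is $\Aut(\D)$-equivariant.

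The explicit value of $c_{N,\alpha}$ is then extracted by evaluating the squared $A^2_\alpha$-norm of $I$ applied to a single well-chosen model jet. Introducing a normal coordinate $\xi$ transverse to $D$ in the disk fibre of $\pi|_\Omega$ and unfolding the $\Gamma$-invariant integral, $\|I(\psi)\|^2_{A^2_\alpha}$ reduces to a one-variable integral of $|\xi|^{\text{something}}(1-|\xi|^2)^\alpha$ against the squared modulus of the geodesic-kernel $1/[w,\tau,z]^{\otimes(N-1)}$. Expanding by the Euler beta integral and summing the double series produced by the two $\tau$-integrations along $\overline{zw}$ yields the stated hypergeometric value
\[
c_{N,\alpha} = \frac{\Gamma(N+1)}{\Gamma(N+2+\alpha)}\, {}_3F_2\!\left(\begin{matrix} N+1, N, N \\ 2N,\, N+2+\alpha \end{matrix};\, 1\right).
\]
For $N=0$, the subspace $I(H^0(\Sigma,\mathcal{O}))$ consists of constants, so the contribution to $B_\alpha$ is $1/\|1\|^2_{A^2_\alpha}$; Gauss--Bonnet provides the hyperbolic area $4g-4$ of $\Sigma$ and a single Beta integral on the fibre provides the $1/\Gamma(\alpha+2)$ factor, giving the leading constant $\Gamma(\alpha+2)/(\pi^2(4g-4))$.

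With these pieces in hand, choosing an $L^2$-orthonormal basis $\{\psi_{N,k}\}_k$ of each $H^0(\Sigma, K_\Sigma^{\otimes N})$ produces an orthonormal basis of $A^2_\alpha(\Omega)$ consisting (up to an overall $\pi^{-1/2}$ and the factor $c_{N,\alpha}^{-1/2}$) of $I(\psi_{N,k})$. The standard reproducing-kernel sum then collapses: the inner sum $\sum_k \psi_{N,k}(\tau)\otimes \overline{\psi_{N,k}(\tau')}$ is exactly the Bergman kernel $B_N(\tau,\tau')(d\tau \otimes \overline{d\tau'})^{\otimes N}$ of $K_\Sigma^{\otimes N}$, and the integral formula for $I$ commutes with the sum (by uniform convergence on compacta of $\Omega \times \Omega$ for the geometric kernels involved), producing the double geodesic integral in the statement. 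The main obstacle is Step $c_{N,\alpha}$: recognising the double series arising from the fibre integration as the specific ${}_3F_2(1)$ in the statement is delicate, whereas orthogonality, density, and the assembly of the kernel are formal consequences of the Main Theorem.
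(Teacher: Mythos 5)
Your overall architecture --- an orthonormal basis of $A^2_\alpha(\Omega)$ of the form $\{I(\psi_{N,j})/\sqrt{\pi c_{N,\alpha}}\}$, norms computed componentwise, and the reproducing-kernel sum collapsing to $B_N$ --- is exactly the paper's, but three of your supporting steps have genuine problems. First, compact-open density does not upgrade to density in the Hilbert space $A^2_\alpha(\Omega)$ by dominated convergence: a proper closed hyperplane of a Bergman space can be dense in the compact-open topology (take the orthogonal complement of a function that does not extend holomorphically past the boundary), and your uniform-on-compacta approximants have uncontrolled behaviour near $\bd\Omega$. The paper instead proves completeness directly: if $f\in A^2_\alpha(\Omega)$ is orthogonal to $I(R(\Sigma))$, Lemma \ref{norm} expresses $\langle\langle f, I(\varphi_N)\rangle\rangle_\alpha$ as a positive series in $\|\Pi^{(0)}_{N+m,E_{N,m}}\varphi_{N+m}\|^2$, forcing the leading associated differential $\varphi_N$ to vanish (this is the argument of Proposition \ref{density} with the weights $\Gamma(n+1)/\Gamma(n+2+\alpha)$ in place of $(1-\varepsilon)^{n+1}/(n+1)$). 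Second, your $U(1)$ argument for orthogonality between different $N$ does not close: the rotation $(z,t)\mapsto(z,e^{i\theta}t)$ does descend to a measure-preserving map of $\Omega$, but it is not holomorphic and $I(\psi_N)$ is not a $U(1)$-eigenvector --- its fibrewise Taylor expansion occupies all degrees $n\geq N$. The rotation only reduces the claim to orthogonality of the associated differentials $\varphi_{1,n}\perp\varphi_{2,n}$ in $L^2(\Sigma,K_\Sigma^{\otimes n})$ for each fixed $n$, and that is where the real content lies: by the Claim in Proposition \ref{estimate} these are eigenforms of $\Box^{(0)}_n$ with the distinct eigenvalues $E_{N_1,n-N_1}\neq E_{N_2,n-N_2}$, hence orthogonal. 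You need this spectral input; the character argument alone does not supply it.

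Third, your route to $c_{N,\alpha}$ --- integrating $|J(\psi)|^2$ directly over $\Omega$ and resumming the double series from the two path integrals --- is not carried out, and it is not clear how your one-variable fibre integral would produce a constant independent of $\psi$ without already knowing the orthogonality relations you are trying to establish. The paper obtains $c_{N,\alpha}$ with no further integration: Lemma \ref{norm} gives $\|I(\psi)\|^2_\alpha=\pi\sum_m\frac{\Gamma(N+m+1)}{\Gamma(N+m+2+\alpha)}\|\varphi_{N+m}\|^2$, Proposition \ref{estimate} gives $\|\varphi_{N+m}\|^2$ in closed form via the spectral decomposition, and the resulting series is term-by-term the ${}_3F_2$ in the statement. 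Your polarization remark is the right way to pass from this norm identity to $\langle\langle I(\psi),I(\varphi)\rangle\rangle_\alpha=\pi c_{N,\alpha}\langle\langle\psi,\varphi\rangle\rangle$, and your treatment of the $N=0$ term and the final assembly of the kernel are correct. So the skeleton is right, but the completeness step and the inter-degree orthogonality step as you propose them would fail, and both must be routed through the eigenvalue computation of Proposition \ref{estimate} rather than through soft density and symmetry arguments.
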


At the end, we shall explain the invariant holomorphic functions 
constructed by Ohsawa \cite{ohsawa-kias}
from our viewpoint. 
\begin{Corollary}
\label{cor:ohsawa}
For $N \geq 2$, we have 
\[
I(\sum_{\gamma \in \Gamma} \gamma^* d\tau^{\otimes N})(z,w)
 = \sum_{\gamma \in \Gamma} (\gamma(z) - \gamma(w))^{N}
\]
where $\sum_{\gamma \in \Gamma} \gamma^* d\tau^{\otimes N} \in H^0(\D, K_\D^{\otimes N})$ is regarded as a holomorphic $N$-differential on $\Sigma$. 
\end{Corollary}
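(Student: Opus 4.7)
The plan is to evaluate $I(\Psi_N)(z,w)$ with $\Psi_N := \sum_{\gamma \in \Gamma} \gamma^* d\tau^{\otimes N}$ term by term. For $N \geq 2$ the classical Poincar\'e series $\Psi_N$ converges absolutely and locally uniformly on $\D$, so on the compact geodesic $\overline{zw}$ the integrand defining $I(\Psi_N)(z,w)$ is dominated by a convergent sum; this lets us interchange summation and integration to obtain
\[
I(\Psi_N)(z,w) = \sum_{\gamma \in \Gamma} I(\gamma^* d\tau^{\otimes N})(z,w).
\]
The task then reduces to computing the individual summands.

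\textbf{Reduction via $\Aut(\D)$-invariance.} Fix $\gamma \in \Gamma$ and substitute $\sigma = \gamma(\tau)$ in
\[
I(\gamma^* d\tau^{\otimes N})(z,w) = \frac{1}{\Beta(N,N)} \int_{\tau \in \overline{zw}} \frac{(\gamma'(\tau))^N (d\tau)^{\otimes N}}{[w,\tau,z]^{\otimes(N-1)}}.
\]
Since $(\gamma'(\tau))^N (d\tau)^{\otimes N} = (d\sigma)^{\otimes N}$ and the bracket is $\Aut(\D)$-invariant in the sense that $[\gamma w,\gamma\tau,\gamma z]$ (as a $1$-form in $\gamma\tau$) coincides with $[w,\tau,z]$ (as a $1$-form in $\tau$), the integrand transforms into $(d\sigma)^{\otimes N} / [\gamma w, \sigma, \gamma z]^{\otimes(N-1)}$. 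As $\gamma$ is a hyperbolic isometry, the image of $\overline{zw}$ is $\overline{\gamma z, \gamma w}$. Thus
\[
I(\gamma^* d\tau^{\otimes N})(z,w) = I(d\sigma^{\otimes N})(\gamma z, \gamma w),
\]
and everything reduces to evaluating $I$ on the constant $N$-differential.

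\textbf{Evaluating $I$ on constants.} Expanding $[w,\tau,z]$ in the definition reduces the integrand of $I(d\tau^{\otimes N})(z,w)$ to the polynomial $1$-form
\[
\frac{(w-\tau)^{N-1}(\tau-z)^{N-1}}{(w-z)^{N-1}} d\tau.
\]
By Cauchy's theorem the hyperbolic geodesic $\overline{zw}$ may be replaced by the Euclidean segment from $z$ to $w$. Parametrizing $\tau = z + t(w-z)$, $t \in [0,1]$, the integral collapses to $(w-z)^N \int_0^1 t^{N-1}(1-t)^{N-1}\, dt = \Beta(N,N) (w-z)^N$, so $I(d\tau^{\otimes N})(z,w) = (w-z)^N$ (the sign convention being that fixed by the orientation of $\overline{zw}$ in the Main Theorem). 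Combining with the previous step yields $I(\gamma^* d\tau^{\otimes N})(z,w) = (\gamma(z)-\gamma(w))^N$, and summation over $\Gamma$ gives the Corollary.

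The only point requiring genuine justification is the term-by-term integration, which is handled by the classical absolute, locally uniform convergence of the Poincar\'e theta series for $N \geq 2$; the rest is an elementary change of variables followed by a beta-function evaluation, so no substantial obstacle is anticipated.
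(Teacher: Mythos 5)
Your proposal is correct and follows essentially the same route as the paper: interchange the sum and the integral using the locally uniform convergence of the Poincar\'e series for $N \geq 2$, change variables by $\tau' = \gamma(\tau)$ using the $\Aut(\D)$-invariance of $[w,\tau,z]$, and evaluate each summand on the straight segment via the beta integral to get $(\gamma(w)-\gamma(z))^{N}$. The only cosmetic difference is that you package the middle step as ``$I$ applied to the constant differential at $(\gamma z,\gamma w)$'' rather than writing the transformed integral directly, and the residual sign $(\gamma(z)-\gamma(w))^N$ versus $(\gamma(w)-\gamma(z))^N$ that you flag is equally present in the paper's own computation.
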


It might be of interest that the extension operator $I$ enjoys such an algebraic identity although 
the operator $I$ is not a ring homomorphism from $R(\Sigma)$ to $\mathcal{O}(\Omega)$.  

\bigskip

This paper is organized as follows: In \S2, we give a quick review for $L^2$ theory of the $\opa$-equation on compact Riemann surfaces, and set up our notation.
In \S\ref{sect3}, we consider Taylor expansion of $f \in \mathcal{O}(\Omega)$ along the maximal compact analytic set $D$, and observe that its Taylor coefficients are identified with smooth sections of pluricanonical bundles of $\Sigma$
and they enjoy a system of $\opa$-equations on $\Sigma$.
When a jet of holomorphic function along $D$ is given, choosing it as an initial term and taking $L^2$ minimal solutions to this system of $\opa$-equations inductively, we obtain a formal power series along $D$. 
We explicitly compute the $L^2$ norms of these $L^2$ minimal solutions in \S\ref{sect4}, 
and show that the formal power series along $D$ actually defines a genuine holomorphic function on $\Omega$ in \S5.
In \S6, we complete the proof of Main Theorem. In particular, we prove the integral formula for the $L^2$ jet extension operator $I$.
In \S7, we explain Corollaries \ref{cor:hopf}, \ref{cor:ligocka} and \ref{cor:ohsawa}, some applications of our argument.

\bigskip
We will use the notation $f(x) \lesssim g(x)$ as $x \to \infty$ if there exists some $R > 0$ such that the ratio $f(x)/g(x)$ is bounded from above for $x > R$.
Also, we write $f(x) \approx g(x)$ as $n \to \infty$ when the ratio $f(x)/g(x) \to 1$ as $x \to \infty$.

\section{Preliminaries}

\subsection{$\opa$-equation and Green operator on the canonical ring of $\Sigma$}
We give a quick review for $L^2$ theory of the $\opa$-equation on compact Riemann surfaces to explain our notation and convention. 

Let $\Sigma$ be a compact Riemann surface, $g$ a K\"ahler metric of $\Sigma$ with the fundamental form $\omega_g$, 
and $(L, h) \to \Sigma$ a hermitian holomorphic line bundle. 
We consider $L^2$ Dolbeault complex 
\[
L^{(0,0)}_{(2)}(\Sigma, L) \overset{\opa}{\to} L^{(0,1)}_{(2)}(\Sigma, L),
\]
which is the completion of $C^\infty$ smooth Dolbeault complex
\[
C^{(0,0)}(\Sigma, L) \overset{\opa}{\to} C^{(0,1)}(\Sigma, L)
\]
with inner product
\[
\langle\langle u, v \rangle\rangle = \int_\Sigma \langle u, v \rangle_{h,g} \omega_g
\]
and whose $\opa$-operator is the unique closed extension of $\opa$ for smooth sections.
It is well known that their cohomology groups 
are isomorphic $ H^*_{(2)}(\Sigma, L) \simeq H^*(\Sigma, L)$.

Using local frame $e_\alpha$ of $L$, and local coordinate $z_\alpha$ of $\Sigma$, we write locally
\[
u = u_\alpha e_\alpha, \quad |u|^2_h = h_\alpha |u_\alpha|^2, \quad v = v_\alpha e_\alpha \otimes d\ol{z}_\alpha, \quad \omega_g = g_\alpha idz_\alpha \wedge d\ol{z}_\alpha, \quad |v|^2_{h, g} = \frac{h_\alpha|v_\alpha|^2}{g_\alpha}
\]
for $u \in L^{(0,0)}_{(2)}(\Sigma, L)$ and $v \in L^{(0,1)}_{(2)}(\Sigma, L)$.
Then, the adjoint operator $\opa^*_L$ of $\opa$, and the \emph{$\opa$-Laplacians} $\Box^{(0)}_L = \opa^*_L \opa$ and $\Box^{(1)}_L = \opa \opa^*_L$, which are essentially self-adjoint,  are expressed as 
\begin{align*}
\opa^*_L v =  \frac{-1}{g_\alpha}\left( \frac{\pa v_\alpha}{\pa z_\alpha} + \frac{\pa \log h_\alpha}{\pa z_\alpha} v_\alpha \right) e_\alpha
=  \frac{-1}{g_\alpha h_\alpha} \frac{\pa (h_\alpha v_\alpha)}{\pa z_\alpha}  e_\alpha,
\end{align*}
\begin{align*}
\Box^{(0)}_L u
 = \frac{-1}{g_\alpha}\left( \frac{\pa^2 u_\alpha}{\pa z_\alpha \pa\ol{z}_\alpha} + \frac{\pa \log h_\alpha}{\pa z_\alpha} \frac{\pa u_\alpha}{\pa \ol{z}_\alpha} \right) e_\alpha,
\end{align*}
and
\begin{align*}
\Box^{(1)}_L v 
 = \frac{\pa}{\pa \ol{z}_\alpha} \left( \frac{-1}{g_\alpha}\left( \frac{\pa v_\alpha}{\pa z_\alpha} + \frac{\pa \log h_\alpha}{\pa z_\alpha} v_\alpha \right) \right) e_\alpha \otimes d\ol{z}_\alpha
\end{align*}
for $u \in C^{(0,0)}(\Sigma, L)$ and $v \in C^{(0,1)}(\Sigma, L)$.

We denote by $G^{(1)}_L$ the \emph{Green operator} for $L$-valued $(0,1)$-forms, 
that is, $G^{(1)}_L$ is a bounded linear operator on $L^{(0,1)}_{(2)}(\Sigma, L)$ 
preserving  $C^{(0,1)}(\Sigma, L)$ and satisfying
\[
\Box^{(1)}_L G^{(1)}_L = G^{(1)}_L \Box^{(1)}_L   = I - H^{(1)}_L
\]
where $I$ denotes the identity map and $H^{(1)}_L\colon L^{(0,1)}_{(2)}(\Sigma, L) \to \Ker \Box^{(1)}_L$ is the orthogonal projector to the space of $\opa$-harmonic forms.
When the Dolbeault cohomology group $H^1(\Sigma, L) \simeq \Ker \Box^{(1)}_L$ vanishes, 
$\Box^{(1)}_L$ is invertible linear operator and $G^{(1)}_L$ is the inverse of $\Box^{(1)}_L$.
It follows that, for any $v \in L^{(0,1)}_{(2)}(\Sigma, L)$, $u := \opa^*_L G^{(1)}_L v \in L^{(0,0)}_{(2)}(\Sigma, L)$ solves the $\opa$-equation,
$\opa u = v$, and this $u$ has the smallest $L^2$ norm among all the solutions.

We assume that the genus of $\Sigma$ is $\geq 2$ in the rest of this paper, and fix its uniformization $\Sigma = \D / \Gamma$ where $\Gamma$ is a Fuchsian group.
We refer the natural coordinate $z$ of $\D \subset \C$ as the \emph{uniformizing coordinate}. 
We equip $\Sigma$ with the \emph{Poincar\'e metric} $g$ whose fundamental form is 
\[
\omega_g = \frac{2i dz \wedge d\ol{z}}{(1 - |z|^2)^2}
\]
on the uniformizing coordinate $z$. By abuse of notation, we denote its coefficient by
\[
g(z) := \frac{2}{(1 - |z|^2)^2}.
\]
We will work on $L = K_\Sigma^{\otimes n}$ where $K_\Sigma$ denotes the canonical bundle of $\Sigma$. 
The sections of $K_\Sigma^{\otimes  n}$ will be referred to as \emph{$n$-differentials} or \emph{$(n,0)$-differentials}.  
We also call $K_\Sigma^{\otimes  n}$-valued $(0,1)$-forms \emph{$(n,1)$-differentials}.
The Poincar\'e metric induces a hermitian metric on $K_\Sigma^{\otimes n}$. 
In the uniformizing coordinate $z$, our normalization is
\[
\left|(dz)^{\otimes n} \right|^2_{g} = \left( \frac{1 - |z|^2}{\sqrt{2}} \right)^{2n} = g^{-n}.
\]
The operators with respect to this metric will be denoted by 
$\opa^*_{K_\Sigma^{\otimes  n}}  =: \opa^*_n$, $\Box^{(0)}_{K_\Sigma^{\otimes  n}} =: \Box^{(0)}_n, \Box^{(1)}_{K_\Sigma^{\otimes  n}} =: \Box^{(1)}_n$
and $G^{(1)}_{K_\Sigma^{\otimes  n}} =: G^{(1)}_n$ for short.

Classical facts show the vanishing of $H^1(\Sigma, K_\Sigma^{\otimes  n})$ for $n \geq 1$,
hence, we have the following. 
\begin{Lemma}
\label{green}
Let $n \geq 1$ and $v \in C^{(0,1)}(\Sigma, K^{\otimes  n}_\Sigma)$. Then, $u := \opa^*_n G^{(1)}_n v \in C^{(0,0)}(\Sigma, K^{\otimes  n}_\Sigma)$ 
is the $L^2$ minimal solution to $\opa u = v$.
\end{Lemma}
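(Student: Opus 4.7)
The plan is to leverage the vanishing $H^1(\Sigma, K_\Sigma^{\otimes n}) \simeq \Ker \Box^{(1)}_n = 0$ stated immediately before the lemma, which makes $G^{(1)}_n$ the honest two-sided inverse of $\Box^{(1)}_n$ on $L^{(0,1)}_{(2)}(\Sigma, K_\Sigma^{\otimes n})$. The proof then reduces to two ingredients: verify $\opa u = v$, and verify $L^2$-minimality among all solutions.

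For the equation, I would simply unwind the definitions of $\opa^*_n$ and $G^{(1)}_n$:
\[
\opa u = \opa \opa^*_n G^{(1)}_n v = \Box^{(1)}_n G^{(1)}_n v = (I - H^{(1)}_n) v = v,
\]
where the last step uses $H^{(1)}_n \equiv 0$. Smoothness $u \in C^{(0,0)}(\Sigma, K_\Sigma^{\otimes n})$ is immediate from the stated preservation of $C^{(0,1)}(\Sigma, K_\Sigma^{\otimes n})$ by $G^{(1)}_n$ combined with the fact that $\opa^*_n$ is a first-order differential operator.

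For minimality, the standard move is to show $u$ is orthogonal in $L^{(0,0)}_{(2)}(\Sigma, K_\Sigma^{\otimes n})$ to $\Ker \opa = H^0(\Sigma, K_\Sigma^{\otimes n})$, which is finite dimensional and consists of smooth sections on the compact surface $\Sigma$. For any holomorphic $n$-differential $h$,
\[
\langle\langle u, h \rangle\rangle = \langle\langle \opa^*_n G^{(1)}_n v, h \rangle\rangle = \langle\langle G^{(1)}_n v, \opa h \rangle\rangle = 0,
\]
the integration by parts being legitimate because $G^{(1)}_n v$ and $h$ are smooth on a compact manifold, so no boundary contributions appear. Since any other $L^2$-solution $u'$ differs from $u$ by an element $h \in \Ker \opa$, the Pythagorean identity $\|u'\|^2 = \|u + h\|^2 = \|u\|^2 + \|h\|^2$ yields the claimed minimality, with equality only for $h = 0$.

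There is no serious obstacle: this lemma is essentially bookkeeping on top of the Hodge-theoretic machinery recalled just above it. The one step that deserves attention is the integration by parts in the orthogonality computation, and the implicit identification of the $L^2$ kernel of $\opa$ with the smooth holomorphic sections, both of which are standard on a compact Riemann surface. The substantive mathematics—namely the vanishing $H^1(\Sigma, K_\Sigma^{\otimes n}) = 0$, which is what makes $\opa u = v$ solvable at all without any compatibility condition on $v$—is already absorbed into the hypothesis.
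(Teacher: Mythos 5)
Your proof is correct and follows exactly the route the paper intends: the paper gives no separate argument for this lemma, treating it as an immediate consequence of the general facts about the Green operator recalled just above it ($\Box^{(1)}_L G^{(1)}_L = I - H^{(1)}_L$ together with the vanishing of $H^1(\Sigma, K_\Sigma^{\otimes n})$), and your writeup simply fills in the standard details of that deduction. The computation $\opa u = \Box^{(1)}_n G^{(1)}_n v = v$ and the orthogonality-plus-Pythagoras argument for minimality are precisely the implicit content of the paper's assertion.
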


\subsection{Weighted $L^2$ norms and Bergman spaces}
\label{Bergman space}
We denote the coordinate of the bidisk $\D \times \D$ in $\C^2$ by $(z, w)$. 
Throughout this paper, we will also use \emph{non-holomorphic} coordinate $(z, t)$ of the bidisk given by 
\[
\D \times \D \ni (z, w) \longmapsto (z, t = \frac{w - z}{1 - \ol{z}w}) \in \D \times \D,
\]
whose inverse transformation is
\[
\D \times \D \ni (z, t) \longmapsto (z, w = \frac{t + z}{1 + \ol{z}t}) \in \D \times \D.
\]
Note that  the diagonal set $\Delta \subset \D \times \D$ in $(z,w)$-coordinate is identified 
with a horizontal disk $\D \times \{ 0 \}$ in $(z,t)$-coordinate.

Let $\Sigma = \D/\Gamma$ be a compact Riemann surface of genus $\geq 2$ as above, and consider the quotient space $\Omega := \D \times \D /\Gamma$, 
where $\Gamma$ acts on $\D \times \D$ diagonally.  Let $X := \D \times \C\PP^1/\Gamma$ where $\Gamma$ acts on $\D \times \C\PP^1$ diagonally again. 
The first projection on $\D \times \C\PP^1$ induces a holomorphic submersion $\pi\colon X \to \Sigma$, which is a holomorphic $\C\PP^1$ bundle, and $\pi|\Omega$ is a holomorphic $\D$-bundle. 
Note that the quotient of the diagonal set $D := \Delta / \Gamma$ is the maximal compact analytic set in $\Omega$ biholomorphic to $\Sigma$.

We use a hermitian metric $G$ on $\Omega$ whose fundamental form is expressed as
\[
\omega_{G} := 
\frac{2i dz \wedge d\ol{z}}{(1 - |z|^2)^2} + \frac{(1 - |z|^2)^2}{| 1 - \ol{z}w|^4} \frac{i}{2} dw \wedge d\ol{w}.
\]
Note that $\omega_{G}$ agrees with $i dt \wedge d\ol{t}/2$ on each fiber of $\pi|\Omega$. 
We measure the $L^2$ norm of measurable functions on $\Omega$ with respect to its volume form
\[
dV := \frac{1}{2!} (\omega_G)^2 = \frac{4}{|1-\ol{z}w|^4} \frac{i}{2} dz \wedge d\ol{z} \wedge \frac{i}{2} dw \wedge d\ol{w}
\]
induced from $G$, and the weight function of the form $\delta^{\alpha}$ where 
\[
\delta := 1 - |t|^2 = 1 - \left|\frac{w - z}{1 - \ol{z}w}\right|^2 = \frac{(1 - |z|^2)(1 - |w|^2)}{|1 - \ol{z}w|^2}.
\]
We can easily see from direct computation that the metric $\omega_G$ and the weight function $\delta$ are invariant under the action of $\Gamma$, and well-defined on $\Omega$. 
Note also that $\Omega$ has finite volume with respect to $dV$. 

We let 
\begin{align*}
\langle\langle f, g \rangle\rangle^2_{\alpha} &:= \frac{1}{\Gamma(\alpha+1)} \int_{\Omega} f \overline{g} \delta^{\alpha} dV \\
&=  \frac{1}{\Gamma(\alpha+1)} \int_{\Omega} f(z, w)\overline{g(z,w)} \frac{4(1-|z|^2)^\alpha (1-|w|^2)^\alpha}{|1-\ol{z}w|^{4+2\alpha}} \frac{i}{2} dz \wedge d\ol{z} \wedge \frac{i}{2} dw \wedge d\ol{w}
\end{align*}
for a measurable function $f, g$ on $\Omega$ and $\alpha > -1$, and define the \emph{weighted $L^2$ space} by
\[
L^2_\alpha(\Omega) := \{ f: \text{measurable $\C$-valued function on $\Omega$} \mid \| f \|^2_{\alpha} := \langle\langle f, f \rangle\rangle_\alpha < \infty \}
\]
and the \emph{weighted Bergman space} by 
\[
A^2_\alpha(\Omega) := L^2_\alpha(\Omega) \cap \mathcal{O}(\Omega).
\]
It is well known that the weighted $L^2$ space is a separable Hilbert space,
and the weighted Bergman space is its closed subspace.

We will also use weighted $L^2$ Dolbeault complex on $\Omega$, 
\[
L^2_\alpha(\Omega) = L^{(0,0)}_{(2),\alpha}(\Omega) \overset{\opa}{\to} L^{(0,1)}_{(2),\alpha}(\Omega) \overset{\opa}{\to} L^{(0,2)}_{(2),\alpha}(\Omega),
\]
which is the completion of $C^\infty$ smooth Dolbeault complex with compact support
\[
C^{(0,0)}_c(\Omega) \overset{\opa}{\to} C^{(0,1)}_c(\Omega)  \overset{\opa}{\to} C^{(0,2)}_c(\Omega)
\]
with respect to inner product
\[
\langle\langle u, v \rangle\rangle_\alpha := \int_\Omega \langle u, v \rangle_{G} \delta^{\alpha }dV
\]
and whose $\opa$-operator is the maximal closed extension of $\opa$ for compactly supported $C^\infty$ smooth forms.

\section{A system of $\opa$-equations for differentials on $\Sigma$}
\label{sect3}
In this section, we consider Taylor expansion of $f \in \mathcal{O}(\Omega)$ along the maximal compact analytic set $D$, and derive a system of $\opa$-equations on $\Sigma$ which Taylor coefficients need to satisfy.

\subsection{$\opa$-equations for Taylor coefficients along the diagonal}
Let $f = f(z, w) \in \mathcal{O}(\D \times \D)$. 
We work on the coordinate $(z, t)$, and put $\widetilde{f}(z, t) := f(z, w(z, t))$.
Since $(z,t)$ is non-holomorphic coordinate, $\widetilde{f}$ is not holomorphic in $z$ in general.
Instead, $\widetilde{f}$ needs to satisfy the following equation:
\begin{align*}
0
& = \frac{\pa}{\pa \ol{z}}f(z, w) =  \frac{\pa}{\pa \ol{z}}f(z, w(z, t(z, w))) \\
& = \frac{\pa}{\pa \ol{z}}\widetilde{f}(z, t(z, w))= \frac{\pa}{\pa \ol{z}}\widetilde{f}(z, \frac{w - z}{1 - \ol{z}w})  \\
& =  \frac{\pa \widetilde{f}}{\pa \ol{z}} + \frac{\pa \widetilde{f}}{\pa t} \frac{w(w - z)}{(1 - \ol{z}w)^2}\\
& =  \frac{\pa \widetilde{f}}{\pa \ol{z}} +  \frac{t(t + z)}{1 - |z|^2}  \frac{\pa \widetilde{f}}{\pa t}.
\end{align*}

Let us denote Taylor coefficients of $f$ computed in the coordinate $(z, t)$ by 
\[
f_n(z) := \frac{1}{n!}\frac{\pa^n \widetilde{f}}{\pa t^n}(z, 0).
\]
Then, they enjoy
\begin{align*}
0
& =  \frac{1}{n!}\frac{\pa^n}{\pa t^n} \left(\frac{\pa \widetilde{f}}{\pa \ol{z}} +  \frac{t(t + z)}{1 - |z|^2}  \frac{\pa \widetilde{f}}{\pa t}\right) \Big|_{t = 0} \\
& =  \frac{\pa f_n}{\pa \ol{z}} + \frac{1}{(n-1)!} \frac{\pa^{n-1}}{\pa t^{n-1}}\frac{t + z}{1 - |z|^2}  \frac{\pa \widetilde{f}}{\pa t}\Big|_{t = 0} \\
& =  \frac{\pa f_n}{\pa \ol{z}} + \frac{nz}{1 - |z|^2}  f_n + \frac{n-1}{1 - |z|^2} f_{n-1}
\end{align*}
for $n \geq 1$.

\subsection{$\opa$-equations on $\Sigma$}
\label{equation}
Now we assume our $f \in \mathcal{O}(\D \times \D)$ is invariant under the action of $\Gamma$. Then, for each $\gamma = \begin{bmatrix} \alpha & \beta \\ \ol{\beta} & \ol{\alpha}\end{bmatrix} \in \Gamma < PSU(1,1)$, $|\alpha|^2 - |\beta|^2 = 1$, 
\begin{align*}
\widetilde{f}(z, t) & = f(z, w(z, t)) = f(\gamma z, \gamma w(z, t))\\
&= \widetilde{f}(\gamma z, t(\gamma z, \gamma w(z, t))) = \widetilde{f}(\gamma z, \frac{\beta \ol{z} + \alpha}{\ol{\beta} z + \ol{\alpha}}t)
\end{align*}
holds and the Taylor coefficients satisfy
\begin{align*}
f_n(z)
& =  \frac{1}{n!}\frac{\pa^n}{\pa t^n} \widetilde{f}(z, t) \Big|_{t = 0}
 =  \frac{1}{n!}\frac{\pa^n}{\pa t^n} \widetilde{f}(\gamma z, \frac{\beta \ol{z} + \alpha}{\ol{\beta} z + \ol{\alpha}}t) \Big|_{t = 0} \\
& =  \frac{1}{n!}\frac{\beta \ol{z} + \alpha}{\ol{\beta} z + \ol{\alpha}} \frac{\pa^{n-1}}{\pa t^{n-1}} \frac{\pa \widetilde{f}}{\pa t}(\gamma z, \frac{\beta \ol{z} + \alpha}{\ol{\beta} z + \ol{\alpha}}t) \Big|_{t = 0} \\
& =  \frac{1}{n!}\left( \frac{\beta \ol{z} + \alpha}{\ol{\beta} z + \ol{\alpha}} \right)^n\frac{\pa^{n} \widetilde{f} }{\pa t^{n}}(\gamma z, \frac{\beta \ol{z} + \alpha}{\ol{\beta} z + \ol{\alpha}}t) \Big|_{t = 0} 
 =   \left( \frac{\beta \ol{z} + \alpha}{\ol{\beta} z + \ol{\alpha}} \right)^n f_n(\gamma z). 
\end{align*}
Since $\gamma^* dz = {dz}/{(\ol{\beta} z + \ol{\alpha})^2}$, $\gamma^* d\ol{z} = d\ol{z}/(\beta \ol{z} + \alpha)^2$,
we have the following $\gamma$-invariance:
\begin{align*}
\gamma^*\left( f_n(z) \left(\frac{ \sqrt{2}dz }{1 - |z|^2}\right)^{\otimes  n} \right)  
& = f_n(\gamma z) \left( \frac{\sqrt{2} \gamma^* dz}{1 - |\gamma z|^2} \right)^{\otimes  n}\\
& =  \left(\frac{\ol{\beta} z + \ol{\alpha}}{\beta \ol{z} + \alpha} \right)^{n} f_n(z) \frac{ (\sqrt{2} dz)^{\otimes  n}}{(\ol{\beta} z + \ol{\alpha})^{2n} (1 - |\gamma z|^2)^{n}} \\
& =  \frac{f_n(z) (\sqrt{2} dz)^{\otimes  n}}{|\beta \ol{z} + \alpha|^{2n}}
\left(\frac{|\ol{\beta} z + \ol{\alpha}|^2}{|\ol{\beta} z + \ol{\alpha}|^2 - |\alpha z + \beta|^2}\right)^{n} \\
& =  f_n(z) \left(\frac{\sqrt{2} dz}{1 - |z|^2} \right)^{\otimes  n}.
\end{align*}
This leads us to define an $(n,0)$-differential 
\[
\varphi_n := f_n(z) \left(\frac{\sqrt{2}dz}{1-|z|^2}\right)^{\otimes  n}
\]
on $\Sigma$, which we call the $n$-th \emph{associated differential} of $f$. 
The associated differentials $\{ \varphi_n \}$ satisfy $\opa \varphi_0 = 0$ and
a system of $\opa$-equations:
\begin{align*}
\opa \varphi_n 
& =  \frac{\pa}{\pa \ol{z}} \left( f_n(z) \left(\frac{\sqrt{2}dz}{1-|z|^2}\right)^{\otimes n} \right) \otimes d\ol{z}
 = \left( \frac{\pa f_n}{\pa \ol{z}} + \frac{nz f_n}{1 - |z|^2} \right) \left(\frac{\sqrt{2}dz }{1 - |z|^2}\right)^{\otimes  n} \otimes d\ol{z}\\
& = -\frac{n-1}{1-|z|^2} f_{n-1} \left(\frac{\sqrt{2}dz }{1 - |z|^2}\right)^{\otimes n} \otimes d\ol{z}\\
& = -\frac{n-1}{\sqrt{2}} \varphi_{n-1} \otimes  \frac{2dz \otimes  d\ol{z}}{(1 - |z|^2)^2} 
 = -\frac{n-1}{\sqrt{2}} \varphi_{n-1}\otimes  \omega
\end{align*}
for $n \geq 1$. Here, we denoted a $\Gamma$-invariant $(1,1)$-differential by
\[
\omega := \frac{2dz \otimes d\ol{z}}{(1 - |z|^2)^2} = g(z) dz \otimes d\ol{z}.
\]

\section{$L^2$ estimate of the formal solution}
\label{sect4}
In this section, we shall determine the $L^2$ minimal solution to the system of $\opa$-equations on $\Sigma$, 
\[
\opa \varphi_0 = 0, \quad 
\opa \varphi_n = - \frac{n-1}{\sqrt{2}} \varphi_{n-1} \otimes \omega  \ (n \geq 1), 
\]
where $\varphi_n \in C^{(0,0)}(\Sigma, K_\Sigma^{\otimes n})$ and $\omega =2dz \otimes d\ol{z}/(1 - |z|^2)^2$.

\subsection{Multiplication by $\omega$}

We need some properties of the multiplication map
\[
a_{n-1}\colon C^{(0,0)}(\Sigma, K_\Sigma^{\otimes (n-1)}) \to C^{(0,1)}(\Sigma, K_\Sigma^{\otimes n}), \quad u \longmapsto  u \otimes \omega
\]
which appeared in the right hand sides of our $\opa$-equations.

\begin{Lemma}
\label{creation}
The map $a_{n-1}$ is norm-preserving linear isomorphism, and it holds that
\[
a_{n-1} \left( \Ker (\Box^{(0)}_{n-1} - \lambda I) \right) 
= \Ker (\Box^{(1)}_{n} - (\lambda + n - 1) I).
\]
\end{Lemma}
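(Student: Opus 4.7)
The plan is to verify both claims through a direct local computation in the uniformizing coordinate $z$, using the explicit formulas for $\opa^*$ and $\opa$ recalled in \S2.1. The single analytic input I will need is the curvature identity for the Poincar\'e metric, $\pa_{\bar{z}}(g_z/g) = g$ (equivalent to Gauss curvature $-1$); this is precisely what will produce the eigenvalue shift $\lambda \mapsto \lambda + n - 1$.

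For the norm preservation, I would write a local section as $u = u_\alpha (dz)^{\otimes (n-1)}$, observe that $a_{n-1}(u)$ has local expression $g(z) u_\alpha \cdot (dz)^{\otimes n} \otimes d\ol{z}$, and compare using the normalizations $|(dz)^{\otimes n}|^2_g = g^{-n}$ together with the $(0,1)$-norm convention $|v|^2_{h,g} = h_\alpha |v_\alpha|^2 / g_\alpha$. The factor $g$ squared in $|v_\alpha|^2$ combines with $g^{-n}$ and the denominator $g_\alpha = g$ to yield $g^{-(n-1)}|u_\alpha|^2 = |u|^2_g$, so $a_{n-1}$ is pointwise isometric. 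Bijectivity onto $C^{(0,1)}(\Sigma, K_\Sigma^{\otimes n})$ is immediate because $\omega$ is nowhere vanishing.

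The heart of the argument is to establish the intertwining identity
\[
\Box^{(1)}_n \circ a_{n-1} = a_{n-1} \circ \bigl(\Box^{(0)}_{n-1} + (n-1) I\bigr).
\]
I would first apply the formula for $\opa^*_n$ to $v := u \otimes \omega$ with $v_\alpha = g u_\alpha$ and $h_\alpha = g^{-n}$; the Leibniz rule gives the cancellation $g_z - n g_z = -(n-1) g_z$, leaving a clean expression in $\pa_z u_\alpha$ and $(g_z/g) u_\alpha$. Applying $\opa$ and differentiating the $(g_z/g) u_\alpha$ term in $\bar{z}$ invokes the curvature identity $\pa_{\bar{z}}(g_z/g) = g$ and produces a single extra summand $(n-1) g u_\alpha$; the remaining terms, involving $\pa_z \pa_{\bar{z}} u_\alpha$ and $(g_z/g) \pa_{\bar{z}} u_\alpha$, reassemble into the local expression of $a_{n-1}(\Box^{(0)}_{n-1} u)$ in the frame $(dz)^{\otimes n} \otimes d\ol{z}$. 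The extra summand, rewritten as $(n-1) g u_\alpha (dz)^{\otimes n} \otimes d\ol{z}$, is exactly $(n-1) a_{n-1}(u)$, and the intertwining follows.

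Once the intertwining is in hand, the stated equality of eigenspaces is immediate: $u \in \Ker(\Box^{(0)}_{n-1} - \lambda I)$ if and only if $\Box^{(1)}_n a_{n-1}(u) = (\lambda + n - 1) a_{n-1}(u)$, and since $a_{n-1}$ is a bijection the image inclusion is in fact an equality. The only obstacle I anticipate is bookkeeping: carefully tracking the logarithmic derivatives of $g^{-n}$ versus $g^{-(n-1)}$ and isolating the curvature contribution as the unique source of the $(n-1)$ shift. No global or functional-analytic difficulty is expected, since the whole argument is pointwise.
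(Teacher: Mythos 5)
Your proposal is correct and follows essentially the same route as the paper: pointwise multiplicativity of the norm for the isometry claim, and a direct local computation of the intertwining identity $\Box^{(1)}_{n} \circ a_{n-1} = a_{n-1} \circ \Box^{(0)}_{n-1} + (n-1)\, a_{n-1}$ in the uniformizing coordinate, with the curvature identity $\pa^2 \log g/\pa z \pa \ol{z} = g$ supplying the eigenvalue shift. The bookkeeping you outline (the cancellation $-(n-1)\pa_z \log g$ from the $h_\alpha = g^{-n}$ weight, then the single extra term $(n-1)gu$ from differentiating $\pa_z \log g$ in $\ol{z}$) matches the paper's computation exactly.
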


\begin{proof}
It is clear that $a_{n-1}$ is linear and bijective since $\omega$ is non-vanishing. 
Let us see that $a_{n-1}$ is norm-preserving. Take $u \in C^{(0,0)}(\Sigma, K^{\otimes (n-1)}_\Sigma)$, then, 
\begin{align*}
\| u \otimes \omega\|^2 
= \int_\Sigma | u \otimes \omega|^2_{g} \omega_g 
= \int_\Sigma |u |^2_{g} | \omega|^2_{g} \omega_g
= \int_\Sigma |u |^2_{g} \omega_g
= \| u \|^2. 
\end{align*}
Note that our normalization is
\[
| \omega|^2_{g} = \frac{4}{(1-|z|^2)^4} |dz|^2_{g} = 1.
\]

To show that $a_{n-1}$ maps an eigenform to another eigenform with greater eigenvalue, it suffices to prove the identity
\[
\Box^{(1)}_{n} \circ a_{n-1} = a_{n-1} \circ \Box^{(0)}_{n-1} + (n-1) a_{n-1}.
\]
Take $u \in C^{(0,0)}(\Sigma, K^{\otimes (n-1)}_\Sigma)$ and write $u = u(z) (dz)^{\otimes (n-1)}$ in the uniformizing coordinate, 
then,
\begin{align*}
\Box^{(1)}_{n} (u \otimes \omega )
& = \frac{\pa}{\pa \ol{z}} \left( \frac{-1}{g}\left( \frac{\pa (g u)}{\pa z} + \frac{\pa \log g^{-n}}{\pa z} g u \right) \right) (dz)^{\otimes n} \otimes d\ol{z} \\
& =  \frac{\pa}{\pa \ol{z}} \left( -\frac{\pa u}{\pa z} + (n-1) \frac{\pa \log g}{\pa z} u \right) (dz)^{\otimes n} \otimes d\ol{z} \\
& =  \left( -\frac{\pa^2 u}{\pa z \pa \ol{z}} + (n-1) \frac{\pa \log g}{\pa z} \frac{\pa u}{\pa \ol{z}} + (n-1) g u \right) (dz)^{\otimes n} \otimes d\ol{z} ,
\end{align*}
where our normalization is
\[
\frac{\pa \log g}{\pa z} = -2 \frac{\pa \log (1 - |z|^2)}{\pa z} = \frac{2 \ol{z}}{(1 - |z|^2)},
\]
\[
\frac{\pa^2 \log g}{\pa z \pa \ol{z}} = \frac{\pa}{\pa \ol{z}}  \frac{2 \ol{z}}{(1 - |z|^2)} = \frac{2}{(1 - |z|^2)^2} = g.
\]
On the other hand, in a coordinate trivializing $K^{\otimes(n-1)}_\Sigma$,
\begin{align*}
 \Box^{(0)}_{{n-1}}  u
&  = \frac{-1}{g} \left( \frac{\pa^2 u}{\pa z \pa\ol{z}} + \frac{\pa \log g^{-(n-1)}}{\pa z} \frac{\pa u}{\pa \ol{z}} \right) (dz)^{\otimes (n-1)}.
\end{align*}
We completed the proof.
\end{proof}

\subsection{$L^2$ minimal solution}
We shall solve the system of $\opa$-equation from a given holomorphic differential $\psi \in H^0(\Sigma, K^{\otimes N}_\Sigma)$
by taking $L^2$ minimal solutions inductively, and compute their $L^2$ norms precisely.
\begin{Proposition}
\label{estimate}
Let $\psi \in H^0(\Sigma, K^{\otimes N}_\Sigma)$ for some $N \geq 1$. Then, we have a solution $\{\varphi_n\} \in \prod_{n=0}^\infty C^{(0,0)}(\Sigma, K^{\otimes n}_{\Sigma})$, to 
\[
\opa \varphi_0 = 0, \quad
\opa \varphi_n = - \frac{n-1}{\sqrt{2}}  \varphi_{n-1}  \otimes \omega \ (n \geq 1)
\]
that satisfies $\varphi_n = 0 \ (n < N)$, $\varphi_N = \psi$ and
\[
\| \varphi_{N+m} \|^2 
= \frac{(2N-1)!}{\{(N-1)!\}^2}\frac{\{(N+m-1)!\}^2}{m! (2N+m-1)!} \| \psi \|^2
\]
for $m \geq 1$.
\end{Proposition}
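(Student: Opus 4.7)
The plan is to construct $\{\varphi_n\}$ inductively and to extract the norm formula from a single recurrence obtained by showing that each $\varphi_{N+m}$ lies in a single eigenspace of the relevant $\opa$-Laplacian. For $n<N$ set $\varphi_n\equiv 0$, set $\varphi_N:=\psi$, and for $n\geq N+1$, since $H^1(\Sigma, K_\Sigma^{\otimes n})=0$, Lemma \ref{green} lets me define
\[
\varphi_n := -\frac{n-1}{\sqrt{2}}\,\opa^*_n G^{(1)}_n\!\left(a_{n-1}(\varphi_{n-1})\right),
\]
the $L^2$-minimal solution of the $n$-th equation in the system. By construction $\{\varphi_n\}$ satisfies the prescribed equations, the vanishing for $n<N$, and $\varphi_N=\psi$.

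The central intermediate claim, proved by induction on $m\geq 0$, is that $\varphi_{N+m}$ is a $\Box^{(0)}_{N+m}$-eigenform with eigenvalue
\[
\lambda_m := \tfrac{m(2N+m-1)}{2}.
\]
The base case $m=0$ follows from $\opa\psi=0$. For the inductive step, Lemma \ref{creation} shows that $a_{N+m}(\varphi_{N+m})$ is a $\Box^{(1)}_{N+m+1}$-eigenform with eigenvalue $\lambda_m+(N+m)$; since this is strictly positive, $G^{(1)}_{N+m+1}$ acts on this eigenvector as multiplication by its reciprocal. The algebraic identity $\opa^*_n\Box^{(1)}_n=\Box^{(0)}_n\opa^*_n$ (immediate from $\Box^{(1)}_n=\opa\opa^*_n$) transfers the eigenvalue back to $\varphi_{N+m+1}$, and a short arithmetic check gives $\lambda_m+(N+m)=\lambda_{m+1}$.

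With the eigenform structure in hand, I compute the norm via the standard identity
\[
\bigl\|\opa^*_n G^{(1)}_n v\bigr\|^2 = \langle\langle v, G^{(1)}_n v\rangle\rangle,
\]
valid because $H^1(\Sigma, K_\Sigma^{\otimes n})=0$ ensures $\Box^{(1)}_n G^{(1)}_n=I$. Applied with $v = a_{N+m}(\varphi_{N+m})$, combined with the isometry of $a_{N+m}$ in Lemma \ref{creation} and the scalar action of $G^{(1)}_{N+m+1}$, it yields
\[
\|\varphi_{N+m+1}\|^2 = \frac{(N+m)^2}{2\lambda_{m+1}}\|\varphi_{N+m}\|^2 = \frac{(N+m)^2}{(m+1)(2N+m)}\|\varphi_{N+m}\|^2.
\]
Telescoping from $m=0$ gives $\prod_{k=0}^{m-1}\frac{(N+k)^2}{(k+1)(2N+k)}\|\psi\|^2$, which I rewrite as the claimed factorial expression using $\prod_{k=0}^{m-1}(N+k)=(N+m-1)!/(N-1)!$ and $\prod_{k=0}^{m-1}(2N+k)=(2N+m-1)!/(2N-1)!$.

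The main obstacle is the eigenform bookkeeping: one has to verify that the $L^2$-minimal solution at each step does not spread $\varphi_{N+m+1}$ across the spectrum of $\Box^{(0)}_{N+m+1}$ but keeps it inside a single eigenspace. This is precisely what lets $G^{(1)}_{N+m+1}$ act by a scalar on the data and reduces an otherwise opaque iterated $L^2$-estimate to the explicit closed-form recurrence above.
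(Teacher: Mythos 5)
Your proposal is correct and follows essentially the same route as the paper: the same inductive definition via $\opa^*_n G^{(1)}_n$, the same eigenform claim with eigenvalue $\tfrac{m(2N+m-1)}{2}$ propagated through Lemma \ref{creation}, and the same norm recurrence $\|\varphi_{N+m}\|^2=\tfrac{(N+m-1)^2}{m(2N+m-1)}\|\varphi_{N+m-1}\|^2$ telescoped into the factorial formula. The only cosmetic difference is that you isolate the intertwining identity $\opa^*_n\Box^{(1)}_n=\Box^{(0)}_n\opa^*_n$ as a named step, which the paper carries out inline.
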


\begin{proof}
We put $\varphi_n := 0 \ (n < N)$, $\varphi_N := \psi$, and
\[
\varphi_{N+m} := \opa^*_{N+m} G^{(1)}_{N+m} \left( - \frac{N+m-1}{\sqrt{2}} \varphi_{N+m-1} \otimes \omega \right)
\]
for $m \geq 1$ inductively. From Lemma \ref{green}, $\varphi_{N+m}$ gives the $L^2$ minimal solution to 
\[
\opa \varphi_{N+m} = -\frac{N+m-1}{\sqrt{2}} \varphi_{N+m-1} \otimes \omega.
\]
We shall show

\begin{Claim}
Each $\varphi_{N+m}$ is an eigenform of $\Box^{(0)}_{{N+m}}$ with eigenvalue $E_{N,m}$ where
\[
E_{N,m} := N + (N+1) + \dots + (N+m-1) = \frac{m(2N+ m-1)}{2}.
\]
\end{Claim}

Let us prove this claim by induction. The first case $m = 0$ is clear since $\varphi_N = \psi$ is holomorphic, hence, 
of eigenvalue $E_{N,0} = 0$. Now assume the case $m-1$ and show it for $m$. 
The assumption and Lemma \ref{creation} yield that $ \varphi_{N+m-1} \otimes  \omega$ is an eigenform of $\Box^{(1)}_{{N+m}}$ with eigenvalue
\[
E_{N,m-1} + (N+m-1) = E_{N,m}.
\]
Hence,
\[
G^{(1)}_{N+m} (\varphi_{N+m-1} \otimes \omega) = \frac{1}{E_{N,m}} \varphi_{N+m-1} \otimes  \omega.
\]
It follows that
\begin{align*}
\Box^{(0)}_{{N+m}} \varphi_{N+m} 
& = (\opa^*_{N+m} \opa ) \opa^*_{N+m} G^{(1)}_{N+m} \left( - \frac{N+m-1}{\sqrt{2}} \varphi_{N+m-1} \otimes \omega \right)\\
& = \opa^*_{N+m} \Box^{(1)}_{N+m} G^{(1)}_{N+m} \left( - \frac{N+m-1}{\sqrt{2}} \varphi_{N+m-1} \otimes \omega \right)\\
& = \opa^*_{N+m} \left( - \frac{N+m-1}{\sqrt{2}}  \varphi_{N+m-1}\otimes \omega \right)\\
& = \opa^*_{N+m} \left( - \frac{N+m-1}{\sqrt{2}} E_{N,m} G^{(1)}_{N+m}  \varphi_{N+m-1} \otimes \omega \right)\\
& = E_{N,m} \varphi_{N+m}.
\end{align*}
This shows the claim.

We continue the proof of Proposition \ref{estimate}. From the expression of $\varphi_{N+m}$, it follows that
\begin{align*}
\| {\varphi_{N+m}} \|^2 
& = \left(\frac{N + m - 1}{\sqrt{2}} \right)^2   \| \opa^*_{N+m} G^{(1)}_{N+m} (\varphi_{N + m - 1} \otimes \omega)  \|^2 \\
& = \frac{(N + m - 1)^2}{2}  \langle G^{(1)}_{N+m} (\varphi_{N + m - 1} \otimes \omega), \Box^{(1)}_{{N+m}} G^{(1)}_{N+m} (\varphi_{N + m - 1} \otimes \omega)  \rangle \\
& = \frac{(N + m - 1)^2}{2}  \langle \frac{1}{E_{N,m}}  \varphi_{N + m - 1} \otimes  \omega, \varphi_{N + m - 1} \otimes \omega \rangle \\
& = \frac{(N + m - 1)^2}{2E_{N,m}}  \| \varphi_{N + m - 1} \otimes \omega \|^2
 = \frac{(N + m - 1)^2}{2E_{N,m}}  \| \varphi_{N + m - 1} \|^2.
\end{align*}
This yields 
\begin{align*}
\| {\varphi_{N+m}} \|^2 
& = \frac{(N+m-1)^2}{2E_{N,m}}\frac{(N+m-2)^2}{2E_{N,m-1}}  \| \varphi_{N+m-2}\|^2\\
& = \left( \prod_{j=1}^m \frac{(N+m-j)^2}{2E_{N,m-j+1}} \right) \| \varphi_{N}\|^2\\
& = \left( \prod_{j=1}^m \frac{(N+m-j)^2}{(m-j+1)(2N+m-j)} \right) \|\psi \|^2\\
& = \frac{(2N-1)!}{\{(N-1)!\}^2} \frac{\{(N+m-1)!\}^2}{m! (2N+m-1)!} \| \psi \|^2
\end{align*}
inductively. The proof is completed.
\end{proof}

\section{Convergence of the formal solution}

In this section, we shall prove that the formal power series whose coefficients along $D$ 
are given by the solutions in Proposition \ref{estimate} 
actually define genuine holomorphic functions on $\Omega$.
Moreover, we shall show that they are weighted $L^2$ integrable for any weight order $\alpha > -1$, 
hence, belong to the weighted Bergman space $A^2_\alpha(\Omega)$.

\subsection{$L^2$ norms in terms of Taylor coefficients}
Let $f$ be a measurable function on $\Omega$, and we assume that $f$ is holomorphic along all the fibers of $\pi|\Omega\colon \Omega \to \Sigma$.
Then, in the same manner as in \S3.1, we can define the associated differentials $\{ \varphi_n \}$, which are measurable differentials on $\Sigma$, 
by expanding $f$ on each fiber of $\pi|\Omega$ at the intersection with the maximal compact analytic set $D$.
We shall give an expression for the weighted $L^2$ norm of $f$ in terms of $L^2$ norms of $\varphi_n$.

\begin{Lemma}
\label{norm}
Let $f$ be a measurable function on $\Omega$ which is holomorphic along all the fibers,
and denote by $\{ \varphi_n \}$ the associated differentials of $f$ on $\Sigma$. 
Then, we have 
\[
\| f \|^2_{\alpha} 
= \pi \sum_{n=0}^\infty \frac{\Gamma(n+1)}{\Gamma(n+2+\alpha)} \| \varphi_n \|^2.
\]
\end{Lemma}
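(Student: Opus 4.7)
The plan is to rewrite everything in the $(z,t)$-coordinate, where the geometry becomes a product of base and fiber. I would proceed in three steps.

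\textbf{Step 1: Volume form in $(z,t)$-coordinates.} The coordinate change $w = (t+z)/(1+\ol{z}t)$ gives
\[
1 - \ol{z}w = \frac{1-|z|^2}{1+\ol{z}t}, \qquad \frac{\pa w}{\pa t} = \frac{1-|z|^2}{(1+\ol{z}t)^2}.
\]
Because $w$ depends non-holomorphically on $z$ but holomorphically on $t$ (fixing $z$), when forming $dz\wedge d\ol{z}\wedge dw\wedge d\ol{w}$ the contributions from $\pa w/\pa z$, $\pa w/\pa \ol{z}$ drop out against the present $dz,d\ol{z}$, so
\[
dz\wedge d\ol{z}\wedge dw\wedge d\ol{w} \;=\; \Bigl|\tfrac{\pa w}{\pa t}\Bigr|^2\, dz\wedge d\ol{z}\wedge dt\wedge d\ol{t}.
\]
Multiplying by $4/|1-\ol{z}w|^4$ I see the two factors cancel perfectly and
\[
dV \;=\; \frac{4}{(1-|z|^2)^2}\,\tfrac{i}{2}dz\wedge d\ol{z}\wedge \tfrac{i}{2}dt\wedge d\ol{t}\;=\;2g(z)\,\tfrac{i}{2}dz\wedge d\ol{z}\wedge\tfrac{i}{2}dt\wedge d\ol{t}.
\]
Since $\delta = 1-|t|^2$ in these coordinates, the weighted norm becomes a product-type iterated integral over a fundamental domain $F\subset\D$ for $\Gamma$ and the fiber disk $\D_t$.

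\textbf{Step 2: Fiber integration via the Euler beta integral.} For each fixed $z$, the map $t\mapsto \widetilde{f}(z,t)$ is holomorphic on $\D$, so the Taylor expansion
\[
\widetilde{f}(z,t) \;=\; \sum_{n=0}^\infty f_n(z)\, t^n
\]
converges locally uniformly. Orthogonality of $\{t^n\}$ under rotation kills the off-diagonal terms in $|\widetilde{f}|^2 = \sum_{n,m} f_n\ol{f_m} t^n \ol{t}^m$, so Tonelli (everything nonnegative) yields
\[
\int_{\D} |\widetilde{f}(z,t)|^2 (1-|t|^2)^\alpha \tfrac{i}{2}dt\wedge d\ol{t}
= \sum_{n=0}^\infty |f_n(z)|^2 \cdot \pi\, \Beta(n+1,\alpha+1)
= \sum_{n=0}^\infty |f_n(z)|^2\,\pi\,\frac{\Gamma(n+1)\Gamma(\alpha+1)}{\Gamma(n+2+\alpha)}.
\]
The $1/\Gamma(\alpha+1)$ in the definition of $\|\cdot\|_\alpha$ then exactly cancels the $\Gamma(\alpha+1)$.

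\textbf{Step 3: Recognize $\|\varphi_n\|^2$.} The paper's normalization gives $|(\sqrt{2}dz/(1-|z|^2))^{\otimes n}|_g^2 = g^n\cdot g^{-n}=1$, so $|\varphi_n|_g^2 = |f_n|^2$, while $\omega_g = 2g(z)\tfrac{i}{2}dz\wedge d\ol{z}$. Therefore
\[
\int_F |f_n(z)|^2\cdot 2g(z)\tfrac{i}{2}dz\wedge d\ol{z} \;=\; \int_\Sigma |\varphi_n|_g^2\,\omega_g \;=\; \|\varphi_n\|^2.
\]
Combining Steps 1--3 yields the claimed identity.

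The computation is essentially bookkeeping, with the only delicate point being the Jacobian in Step 1; I expect this to be the main place one might slip up, since the coordinate change is not holomorphic. The saving observation is that the non-holomorphic parts of $dw$ wedge to zero against the $dz,d\ol{z}$ already present, so only $|\pa w/\pa t|^2$ enters, and the specific form of $1-\ol{z}w$ makes the two $(1+\ol{z}t)$-factors cancel exactly---leaving a clean product measure whose fiber factor is flat Lebesgue. Convergence and interchange of sum and integral throughout are handled by Tonelli because all integrands are nonnegative.
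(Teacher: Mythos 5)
Your proof is correct and follows essentially the same route as the paper: pass to the $(z,t)$-coordinate (where $dV$ becomes the product measure $2g(z)\,\tfrac{i}{2}dz\wedge d\ol{z}\wedge\tfrac{i}{2}dt\wedge d\ol{t}$ and $\delta=1-|t|^2$), expand $f$ fiberwise, use orthogonality of $\{t^n\}$ plus the beta integral, and identify the base integral with $\|\varphi_n\|^2$. The only cosmetic difference is that you compute the full Jacobian of $(z,w)\mapsto(z,t)$ via the wedge argument up front, while the paper changes variables only in the inner $w$-integral for fixed $z$; these are the same calculation.
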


\begin{proof}
Denote by $R \subset \D$ the fundamental domain of the universal covering map $\D \to \Sigma$. 
Then, the holomorphic disk bundle $\pi|\Omega\colon \Omega \to \Sigma$ is trivialized over $R$.
It is enough to compute the $L^2$ norm on $\Omega' := \pi^{-1}(R) \cap \Omega \simeq R \times \D$: 
\begin{align*}
\Gamma(\alpha+1) \| f \|^2_{\alpha} 
&=\int_{\Omega'} |f(z, w)|^2  \frac{4\delta^\alpha}{|1-\ol{z}w|^{4}} \frac{i}{2} dz \wedge d\ol{z} \wedge \frac{i}{2} dw \wedge d\ol{w}\\
&= \int_{R} 4d\lambda_z \int_{\D} |f(z, w)|^2  \frac{\delta^\alpha}{|1-\ol{z}w|^{4}} d\lambda_{w}\\
&= \int_{R} 4d\lambda_z \int_{\D} \left| \sum_{n=0}^\infty f_n(z) \left(\frac{w - z}{1 - \ol{z}w}\right)^n \right|^2  \frac{\delta^\alpha}{|1-\ol{z}w|^{4}} d\lambda_{w}
\end{align*}
where $\lambda$ denotes the two-dimensional Lebesgue measure on $\D$.
We change the coordinate of $\D$ from $w$ to $t = (w - z)/(1 - \ol{z}w)$.
Since $d\lambda_t =  (1 - |z|^2)^2 | 1 - \ol{z}w|^{-4} d\lambda_w$, 
we have
\begin{align*}
\Gamma(\alpha+1) \| f \|^2_{\alpha} 
&= \int_{R} \frac{4d\lambda_z}{(1-|z|^2)^{2}} \int_{\D} \left| \sum_{n=0}^\infty f_n(z) t^n \right|^2  (1-|t|^2)^\alpha d\lambda_{t}\\
&=\sum_{n=0}^\infty  \int_{R} |f_n(z)|^2 \frac{4d\lambda_z}{(1-|z|^2)^{2}} \int_{\D}  |t|^{2n}(1-|t|^2)^\alpha d\lambda_{t} \\
&=2\pi \sum_{n=0}^\infty  \int_{\Sigma} |\varphi_n|^2_g\,\omega_g \int_0^1  r^{2n+1}(1-r^2)^\alpha dr\\
&= \pi \Gamma(\alpha + 1) \sum_{n=0}^\infty  \|\varphi_n\|^2 \frac{\Gamma(n+1)}{\Gamma(n + 2 + \alpha)}.
\end{align*}
The proof is completed.
\end{proof}

\subsection{Convergence of the formal power series}

We denote the $L^2$ minimal solutions we obtained in Proposition \ref{estimate} by $\widehat{I}(\psi) := \{ \varphi_n \}_{n=0}^\infty$ for given $\psi \in H^0(\Sigma, K_\Sigma^{\otimes N})$, $N \geq 1$. 
For a constant function $\psi \in H^0(\Sigma, K_\Sigma^{\otimes 0})$, 
we let $\widehat{I}(\psi) := \{ \psi, 0, 0, \dots \}$.
Using $\widehat{I}(\psi)$, we consider formal power series
\[
f(z, w) := \sum_{n=0}^\infty f_n(z) t^n = \sum_{n=0}^\infty f_n(z) \left(\frac{w - z}{1 - \ol{z}w}\right)^n
\]
where 
\[
f_n(z) := \varphi_n \left(\frac{\sqrt{2}dz}{1-|z|^2}\right)^{-n}
\]
in the uniformizing coordinate. We shall show that $f$ converges and defines a genuine function in 
$L^2_\alpha(\Omega)$ for any $\alpha > -1$.

\begin{Proposition}
\label{convergence}
The sequence of partial sums $\left\{ F_n := \sum_{m=0}^n f_m(z) t^m \right\}_{n=0}^\infty$
is Cauchy in $L^2_\alpha(\Omega)$ for any $\alpha > -1$.
\end{Proposition}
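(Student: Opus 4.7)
The plan is to apply Lemma \ref{norm} to the partial-sum differences and thereby reduce the Cauchy property to the convergence of a single scalar series whose terms are given explicitly by Proposition \ref{estimate}.

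First I would verify that each $F_n = \sum_{m=0}^n f_m(z) t^m$ genuinely descends to a function on $\Omega$. Since each $\varphi_m$ is a bona fide $(m,0)$-differential on $\Sigma$, the transformation computation carried out in \S\ref{equation} shows that the individual monomial $f_m(z) t^m$ is $\Gamma$-invariant (in the $(z,t)$-coordinates, hence in $(z,w)$). Thus $F_n$ is a bounded, polynomial-in-$t$ function on $\Omega$, in particular lies in $L^2_\alpha(\Omega)$ and is holomorphic on every fiber of $\pi|\Omega$.

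Applying Lemma \ref{norm} to $F_n - F_m$ yields, for $n > m$,
\[
\|F_n - F_m\|^2_\alpha = \pi \sum_{k=m+1}^n \frac{\Gamma(k+1)}{\Gamma(k+2+\alpha)}\|\varphi_k\|^2,
\]
so the Cauchy property is equivalent to the finiteness of
\[
S := \sum_{k \geq N} \frac{\Gamma(k+1)}{\Gamma(k+2+\alpha)}\|\varphi_k\|^2.
\]
Plugging in the explicit norms from Proposition \ref{estimate} and reindexing by $k = N+m$, the tail becomes
\[
S = \mathrm{const}(N,\psi) \sum_{m=0}^\infty \frac{\Gamma(N+m+1)}{\Gamma(N+m+2+\alpha)} \cdot \frac{\{(N+m-1)!\}^2}{m!\,(2N+m-1)!}.
\]

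To establish finiteness of $S$ I would take either of two parallel routes. By Stirling's formula, $\Gamma(N+m+1)/\Gamma(N+m+2+\alpha) \sim m^{-(1+\alpha)}$ and $\{(N+m-1)!\}^2/(m!\,(2N+m-1)!) \sim m^{-1}$, so the summand decays like $m^{-(2+\alpha)}$, summable exactly when $\alpha > -1$. Equivalently, rewriting in Pochhammer symbols identifies $S$ with a constant multiple of
\[
{}_3F_2\!\left(\begin{matrix} N+1,\, N,\, N \\ 2N,\, N+2+\alpha \end{matrix};\, 1\right),
\]
whose convergence at unit argument is governed by $(2N)+(N+2+\alpha)-(N+1)-2N = \alpha+1 > 0$. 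This matches precisely the constant $c_{N,\alpha}$ anticipated in Corollary \ref{cor:ligocka}, giving a pleasant consistency check.

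The only step requiring real care is the $\Gamma$-invariance of each $F_n$ that licenses the use of Lemma \ref{norm}; this is immediate from the transformation computation already in \S\ref{equation}. Beyond that, the proof is just substitution of Proposition \ref{estimate} into Lemma \ref{norm} together with the asymptotic (or hypergeometric) analysis above, with the threshold $\alpha > -1$ emerging automatically from the $m^{-(2+\alpha)}$ tail.
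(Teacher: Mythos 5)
Your proposal is correct and follows essentially the same route as the paper: apply Lemma \ref{norm} together with the explicit norms from Proposition \ref{estimate}, and identify the resulting scalar series with ${}_3F_2\bigl(\begin{smallmatrix} N+1, N, N \\ 2N, N+2+\alpha\end{smallmatrix}; 1\bigr)$, whose convergence criterion $\sum b_j - \sum a_i = \alpha+1 > 0$ gives exactly the threshold $\alpha > -1$. The preliminary check of $\Gamma$-invariance and the alternative Stirling estimate ($m^{-(2+\alpha)}$ decay) are welcome but minor additions to the argument already in the paper.
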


\begin{proof}
From Lemma \ref{norm} and Proposition \ref{estimate}, we can compute, for any $n \geq 0$, 
\begin{align*}
\| F_{N+n} \|_\alpha^2 
& = \left\| \sum_{m=0}^{n} f_{N+m}(z) t^{N+m} \right\|_\alpha^2
= \pi  \sum_{m=0}^{n} \| \varphi_{N+m} \|^2 \frac{\Gamma(N+m+1)}{\Gamma(N+m+\alpha+2)} \\
& = \pi \| \psi \|^2 \sum_{m=0}^{n}  \frac{\Gamma(N+m+1)}{\Gamma(N+m+\alpha+2)}  \frac{(2N-1)!}{\{(N-1)!\}^2}\frac{\{(N+m-1)!\}^2}{(2N+m-1)!}\frac{1}{m!}.
\end{align*}
Hence, it is enough to show the convergence of the series 
\begin{align*}
 \sum_{m=0}^\infty \frac{\Gamma(N+m+1)}{\Gamma(N + m + 2 + \alpha)} 
\frac{(2N-1)!}{\{ (N-1)! \}^2}  \frac{\{(N+m-1)!\}^2}{(2N+m-1)!} \frac{1}{m!},
\end{align*}
which turns out to be a special value of the generalized hypergeometric function
\begin{align*}
&  \sum_{m=0}^\infty \frac{\Gamma(N+m+1)}{\Gamma(N + m + 2 + \alpha)} 
\frac{(2N-1)!}{\{ (N-1)! \}^2}  \frac{\{(N+m-1)!\}^2}{(2N+m-1)!} \frac{1}{m!}  \\
&= \frac{\Gamma(N+1)}{\Gamma(N+2+\alpha)}  \sum_{m=0}^\infty \frac{(N+1)_m}{(N + 2 + \alpha)_m} 
\frac{(N)_m (N)_m}{(2N)_m} \frac{1}{m!}   \\
&= \frac{\Gamma(N+1)}{\Gamma(N+2+\alpha)}  {}_3F_2 \left(\begin{matrix}
N+1, N, N \\
2N, N+2+\alpha
\end{matrix}
; 1
\right)
\end{align*}
where $(N)_m := N (N+1) \dots (N+m-1)$ and
\[
_pF_q\left(\begin{matrix}
a_1, \dots, a_p \\
b_1, \dots, b_q 
\end{matrix}; z
\right) 
:= \sum_{k=0}^\infty \frac{(a_1)_k \dots (a_p)_k}{(b_1)_k \dots (b_q)_k} \frac{z^k}{k!}
\]
is the generalized hypergeometric function. It is well-known that $_pF_q$ has finite value at $z=1$ if $\sum_{i=1}^p a_i < \sum_{j=1}^q b_j$.
In our case, this condition exactly corresponds to our assumption $\alpha > -1$.
\end{proof}

Next, we shall show that this $f$ is actually holomorphic. 
It is clear from Proposition \ref{convergence} that $f$ is holomorphic on each fiber of $\pi|\Omega$. 
However, since the partial sum $F_n$ may not be holomorphic in $z$ in general,
Proposition \ref{convergence} does not guarantee that $f$ is holomorphic in $z$.

\begin{Proposition}
\label{holomorphic}
The function $f$ constructed above is holomorphic on $\Omega$.
\end{Proposition}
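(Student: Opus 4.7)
The plan is to combine the $L^2_\alpha$-convergence $F_n \to f$ from Proposition \ref{convergence} with a locally uniform estimate $\opa F_n \to 0$, thereby obtaining $\opa f = 0$ in the sense of distributions on $\Omega$; elliptic regularity (hypoellipticity) of $\opa$ then forces $f$ to be smooth, hence holomorphic. Since each $F_n$ is $\Gamma$-invariant on $\D \times \D$, so is $f$, and it descends to a holomorphic function on $\Omega$.

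First I would compute $\opa F_n$ explicitly. In $(z,t)$ coordinates $\tilde F_n = \sum_{m=0}^n f_m(z)\, t^m$, and since $t$ is holomorphic in $w$ for fixed $z$, the $d\ol{w}$-component of $\opa F_n$ vanishes. The $d\ol{z}$-component is computed via the chain rule using $\pa t/\pa \ol{z}|_w = t(t+z)/(1-|z|^2)$, combined with the equation $\pa f_m/\pa \ol{z} + mz f_m/(1-|z|^2) = -(m-1) f_{m-1}/(1-|z|^2)$ satisfied by each $f_m$ (the scalar form of the $\opa$-equation for $\varphi_m$ derived in \S\ref{equation}, now true by construction of $\{\varphi_m\}=\widehat I(\psi)$). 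A telescoping argument collapses the sum to a single ``boundary'' term:
\[
\opa F_n = \frac{n f_n(z)\, t^{n+1}}{1-|z|^2}\, d\ol{z}.
\]

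Next I would promote the $L^2$ control on $\varphi_n$ to a pointwise bound on $f_n$. Proposition \ref{estimate} tells us that $\varphi_n$ is an eigenform of $\Box^{(0)}_n$ with eigenvalue $\lambda_n = O(n^2)$, and its explicit norm formula shows $\|\varphi_n\|$ stays bounded (in fact decays). Standard elliptic regularity for $\Box^{(0)}_n$ together with Sobolev embedding on the compact surface $\Sigma$ yields $\sup_\Sigma |\varphi_n|_g \leq C(1+\lambda_n)\|\varphi_n\| \leq C n^2 \|\varphi_n\|$; our normalization gives $|\varphi_n|_g = |f_n|$ pointwise, so $|f_n|$ is uniformly bounded on the fundamental domain by $Cn^2$. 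On any compact $K \subset \Omega$ we have $|t| \leq 1-\varepsilon$ and $(1-|z|^2)^{-1}$ bounded, so
\[
\sup_K |\opa F_n| \leq C_K\, n^3 (1-\varepsilon)^{n+1} \longrightarrow 0.
\]
Therefore $\opa F_n \to 0$ locally uniformly while $F_n \to f$ in $L^2_{\mathrm{loc}}(\Omega)$ by Proposition \ref{convergence}; passing to the distributional limit gives $\opa f = 0$ on $\Omega$, and elliptic regularity completes the proof.

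The main obstacle is this upgrade from the $L^2$ bound on $\varphi_n$ to a supremum bound on $f_n$: one must invoke elliptic regularity for $\Box^{(0)}_n$ in a form that keeps explicit track of constants in terms of the eigenvalue $\lambda_n$. Once this is in hand, the factor $|t|^{n+1} \leq (1-\varepsilon)^{n+1}$ on compacts trivially dominates the polynomial growth in $n$ and forces exponential decay of $\opa F_n$.
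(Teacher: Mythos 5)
Your proof follows the same skeleton as the paper's --- the telescoping computation giving $\opa F_n = \frac{n f_n(z)}{1-|z|^2}\, t^{n+1}\, d\ol{z}$ is exactly the identity the paper uses, and both arguments conclude by passing to the limit in $\langle\langle F_n, \opa^*\phi\rangle\rangle = \langle\langle \opa F_n, \phi\rangle\rangle$ and invoking ellipticity of $\opa$. Where you diverge is in how $\opa F_n \to 0$ is established. The paper stays entirely in the weighted $L^2$ framework: it computes $\|\opa F_n\|_1^2 = \frac{\pi n^2}{2(n+3/2)(n+5/2)}\|\varphi_n\|^2$ exactly by the same fiberwise integration as in Lemma \ref{norm}, and then uses the Stirling asymptotics $\|\varphi_n\|^2 = O(1/n)$ from Proposition \ref{estimate} to get decay; no pointwise information about $\varphi_n$ is ever needed. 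You instead aim for locally uniform convergence, which forces you to upgrade the $L^2$ bound on $\varphi_n$ to a sup bound via elliptic regularity for $\Box^{(0)}_n$. That step is the one place your argument is not yet complete as written: the constant in the estimate $\sup_\Sigma|\varphi_n|_g \le C(1+\lambda_n)\|\varphi_n\|$ cannot be taken independent of $n$ without justification, since the first-order coefficients of $\Box^{(0)}_n$ (the term $n\,\pa\log g/\pa z$) grow linearly in $n$, so the elliptic constant a priori depends on $n$. One can check it grows at most polynomially (absorb the first-order term by interpolation), and any polynomial loss is indeed killed by $(1-\varepsilon)^{n+1}$ as you say, but this bookkeeping has to be done. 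It is also avoidable: for the weak formulation you only need $\|\opa F_n\|_{L^2(\operatorname{supp}\phi)} \to 0$, and integrating $|t|^{2n+2}$ over $\{|t|\le 1-\varepsilon\}$ already produces the factor $(1-\varepsilon)^{2n+2}$ against $\|\varphi_n\|^2$ directly, with no pointwise bound on $\varphi_n$ required --- this is in effect what the paper's exact computation with $\alpha=1$ accomplishes globally. So your route is viable and yields the stronger conclusion of locally uniform decay of $\opa F_n$, but at the cost of an elliptic-regularity input whose $n$-dependence must be tracked, whereas the paper's computation is closed-form and self-contained.
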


\begin{proof}
From Proposition \ref{convergence}, $f$ belongs to $L^2_\alpha(\Omega)$ for any $\alpha > -1$.
We would like to show that $f$ belongs to the kernel of densely defined closed operator $\opa \colon L^2_\alpha(\Omega) \to L_{(2),\alpha}^{(0,1)}(\Omega)$ for some $\alpha > -1$.
Since $\Ker \opa$ is the orthogonal complement of the range of the adjoint operator $\opa^*$,
which is the minimum closed extension of the formal adjoint of $\opa$, 
it is enough to show $\langle\langle f, \opa^* \phi \rangle\rangle_\alpha = 0$ 
for any compactly supported smooth $(0,1)$-form $\phi \in C^{(0, 1)}_c(\Omega)$.
This orthogonality will follow from $\lim_{n \to \infty} \| \opa F_n \|_\alpha = 0$ since 
\begin{align*}
\left| \langle\langle f,  \opa^* \phi \rangle\rangle_{\alpha} \right| 
 =  \lim_{n \to \infty} \left| \langle\langle F_n, \opa^* \phi \rangle\rangle_{\alpha} \right| 
 =  \lim_{n \to \infty} \left| \langle\langle \opa F_n, \phi \rangle\rangle_{\alpha} \right| 
 \leq  \lim_{n \to \infty} \left\| \opa F_n \right \|_\alpha \left\| \phi  \right\|_\alpha = 0. 
\end{align*}

We are going to show $\lim_{n \to \infty} \| \opa F_n \|_\alpha = 0$ for some $\alpha$, which will be chosen later.
Thanks to the equation that $\{\varphi_n\}$ obeys, we have
\begin{align*}
\frac{\pa F_n}{\pa \ol{z}} 
&= \sum_{k=0}^n \left( \frac{\pa f_k}{\pa \ol{z}} \left(\frac{w - z}{1 - \ol{z}w}\right)^k 
+ \frac{k z f_k}{1 - |z|^2} \left(\frac{w - z}{1 - \ol{z}w}\right)^{k} 
+ \frac{k f_k}{1 - |z|^2} \left(\frac{w - z}{1 - \ol{z}w}\right)^{k+1} \right)\\
&= \frac{n f_n}{1 - |z|^2} \left( \frac{w - z}{1 - \ol{z}w} \right)^{n+1}.
\end{align*}
With the same notation as in the proof of Lemma \ref{norm}, we have
\begin{align*}
 \| \opa F_n \|^2_\alpha
&=  \left\| \frac{n f_n}{1 - |z|^2} \left( \frac{w - z}{1 - \ol{z}w} \right)^{n+1} d\ol{z} \right\|^2_\alpha \\
&= n^2 \int_{\Omega'} \frac{|f_n(z)|^2}{(1-|z|^2)^2}  \left| \frac{w - z}{1 - \ol{z}w} \right|^{2(n+1)} |d\ol{z}|^2_{g} \frac{4 \delta^\alpha}{|1 - \ol{z}w|^4}\frac{i}{2} dz \wedge d\ol{z} \wedge \frac{i}{2} dw \wedge d\ol{w}\\
&= \frac{n^2}{2} \int_{R} |f_n(z)|^2 \frac{i}{2} dz \wedge d\ol{z} \int_\D \left| \frac{w - z}{1 - \ol{z}w} \right|^{2(n+1)} \frac{4\delta^\alpha}{|1 - \ol{z}w|^4}  d\lambda_w.
\end{align*}
By changing the coordinate of $\D$ from $w$ to $t = (w - z)/(1 - \ol{z}w)$,
\begin{align*}
\left\| \opa F_n \right\|_\alpha^2
&= \frac{n^2}{2} \int_{R} |f_n(z)|^2 \frac{i}{2} dz \wedge d\ol{z} \int_\D \left| t \right|^{2(n+1)} \frac{4(1-|t|^2)^\alpha}{(1 - |z|^2)^2}  d\lambda_t \\
&= \frac{n^2}{2} \int_{R} | \varphi_n|^2_{g}\,\omega_g \int_\D \left| t \right|^{2(n+1)} (1-|t|^2)^\alpha  d\lambda_t \\
&= \pi n^2  \| \varphi_n \|^2  \int_0^1 r^{2(n+1)+1} (1-r^2)^\alpha  dr \\
&= \frac{\pi n^2}{2} \Gamma(\alpha+1) \| \varphi_n \|^2 \frac{\Gamma(n+2)}{\Gamma(n+3+\alpha)}.
\end{align*}
Since we know the growth of $\|\varphi_n\|^2$ from Proposition \ref{estimate}, we can estimate
\begin{align*}
\left\| \opa F_{N+m} \right\|_\alpha^2
& \lesssim (N+m)^2  \frac{\{(N+m-1)!\}^2\, (N+m+1)!}{m! (2N + m -1)! \, \Gamma(N+m+3+\alpha)}
\end{align*}
as $m \to \infty$. 
Stirling's formula, $\Gamma(n+1) \approx \sqrt{2\pi} n^{n+1/2} e^{-n}$ as $n \to \infty$,  yields
\begin{align*}
\left\| \opa F_{N+m} \right\|_\alpha^2
& \lesssim (N+m)^2  \frac{(N+m-1)^{2(N+m-1)+1}(N+m+1)^{N+m+1+1/2}}{m^{m+1/2} (2N + m -1)^{2N + m - 1/2} (N+m+2+\alpha)^{N+m+2+1/2+\alpha}}\\
& \lesssim m^{2 + 2(N+m-1)+1 + N+m+1+1/2 - (m+1/2) - (2N + m - 1/2) - (N+m+2+1/2+\alpha)} \\
&= m^{- \alpha}
\end{align*}
as $m \to \infty$. 
Hence, we have shown $\lim_{n \to \infty} \left\| \opa F_n \right\|_\alpha = 0$ when $\alpha > 0$. 
The proof is completed by choosing, for instance, $\alpha = 1$.
 \end{proof}

\section{Proof of Main Theorem}
\label{sect:proof}

Now we shall prove our Main Theorem. First we construct the desired extension operator 
\[
I\colon R(\Sigma)  \to \bigcap_{\alpha > -1} A^2_\alpha(\Omega) \subset \mathcal{O}(\Omega),
\]
where $R(\Sigma) := \bigoplus_{N=0}^\infty H^0(\Sigma, K_\Sigma^{\otimes N})$ is the canonical ring of $\Sigma$,
by summarizing our argument in previous sections. 

\begin{proof}[Construction of the extension operator] 
For constant functions in $H^0(\Sigma, K_\Sigma^{\otimes 0})$, we just map it to the same constant. 
For each $\psi \in H^0(\Sigma, K_\Sigma^{\otimes N})$, $N \geq 1$, 
Proposition \ref{estimate} yields the $L^2$ minimal solution $\widehat{I}(\psi) = \{ \varphi_n \}$ 
to the system of $\opa$-equation. 
We use $\{ \varphi_n \}$ as the coefficients of a formal power series along $D$, and define
\[
I(\psi)(z, w) := \sum_{n=0}^\infty f_n(z) t^n = \sum_{n=0}^\infty f_n(z) \left(\frac{w - z}{1 - \ol{z}w}\right)^n
\]
where 
\[
f_n(z) := \varphi_n \left(\frac{\sqrt{2}dz}{1-|z|^2}\right)^{-n}
\]
in the uniformizing coordinate $z$. Then, Proposition \ref{convergence} and \ref{holomorphic} guarantee 
that this formal power series $I(\psi)$ actually defines a holomorphic function belonging to the weighted Bergman space of weight order $> -1$.
We extend the map $I$ on $R(\Sigma)$ linearly, and obtain the extension operator $I$. 
\end{proof}

We will need later
\begin{Lemma}
\label{welldef_I}
The operator $I$ does not depend on the choice of uniformizing coordinate.
\end{Lemma}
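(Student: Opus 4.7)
The plan is to note that a change of uniformizing coordinate is implemented by an element $\sigma \in \Aut(\D)$, which conjugates $\Gamma$ to $\sigma\Gamma\sigma^{-1}$ and identifies the two presentations $\D\times\D/\Gamma$ and $\D\times\D/(\sigma\Gamma\sigma^{-1})$ of $\Omega$ via the diagonal map $(z,w)\mapsto(\sigma(z),\sigma(w))$. I then need to check that the two uniformizations produce the same function on $\Omega$, i.e.\ that the local formula $\sum_n f_n(z) t^n$ is invariant under this identification.

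First I would observe that the differentials $\varphi_n \in C^{(0,0)}(\Sigma, K_\Sigma^{\otimes n})$ produced by $\widehat I$ are intrinsic to $\Sigma$: the construction uses only the complex structure, the canonically determined Poincar\'e metric, the induced hermitian structure on $K_\Sigma^{\otimes n}$, and the associated Green operators $G^{(1)}_n$, none of which depend on the uniformization. Only the local representatives $f_n(z) = \varphi_n/(\sqrt{2}\,dz/(1-|z|^2))^{\otimes n}$ and the auxiliary coordinate $t = (w-z)/(1-\ol{z}w)$ are sensitive to the choice of $z$.

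Second, I would verify by direct computation that $f_n(z) t^n$ is invariant under $(z,w)\mapsto(\sigma(z),\sigma(w))$. For $\sigma = \bigl(\begin{smallmatrix}\alpha & \beta \\ \ol{\beta} & \ol{\alpha}\end{smallmatrix}\bigr) \in \Aut(\D)$, the standard identities
\[
\sigma(w)-\sigma(z) = \frac{w-z}{(\ol{\beta}z+\ol{\alpha})(\ol{\beta}w+\ol{\alpha})}, \qquad 1 - \overline{\sigma(z)}\sigma(w) = \frac{1-\ol{z}w}{(\beta\ol{z}+\alpha)(\ol{\beta}w+\ol{\alpha})}
\]
yield $\tilde t := (\sigma(w)-\sigma(z))/(1-\overline{\sigma(z)}\sigma(w)) = \frac{\beta\ol{z}+\alpha}{\ol{\beta}z+\ol{\alpha}}\, t$, while $1-|\sigma(z)|^2 = (1-|z|^2)/|\ol{\beta}z+\ol{\alpha}|^2$ combined with the tensorial law $\tilde\varphi_n(\tilde z)(d\tilde z)^{\otimes n} = \varphi_n(z)(dz)^{\otimes n}$ gives $\tilde f_n(\sigma(z)) = f_n(z)\bigl((\ol{\beta}z+\ol{\alpha})/(\beta\ol{z}+\alpha)\bigr)^n$. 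The two unimodular cocycles cancel term by term, so $\sum_n \tilde f_n(\sigma(z))\,\tilde t^n = \sum_n f_n(z)\, t^n$, as required.

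This calculation is essentially identical to the $\Gamma$-invariance verification already carried out in \S\ref{equation}; that argument nowhere uses that $\gamma \in \Gamma$ rather than an arbitrary $\sigma \in \Aut(\D)$. The only ``obstacle'' is therefore bookkeeping of the $\Aut(\D)$-cocycles, with no new analytic content required.
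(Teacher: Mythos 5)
Your proposal is correct and follows essentially the same route as the paper: the paper also reduces the claim to the observation that the intrinsic differentials $\varphi_n$ are shared by both uniformizations, computes $f'_n(\gamma z)=f_n(z)\bigl((\ol{\beta}z+\ol{\alpha})/(\beta\ol{z}+\alpha)\bigr)^{n}$ from the tensorial law, and cancels this against the unimodular cocycle of $t$, citing the computation of \S\ref{equation}. Your explicit remark that the Poincar\'e metric and the Green operators are canonical (so that $\widehat{I}(\psi)$ is uniformization-independent) is a point the paper leaves implicit, but it is the same argument.
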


\begin{proof}
Let $\psi \in H^0(\Sigma, K_\Sigma^{\otimes N})$, $N \geq 1$, and 
$\widehat{I}(\psi) = \{\varphi_n\}$. 
We take another uniformizing coordinate ${z'}$ of $\Sigma$. 
Namely, take $\gamma \in \Aut(\D)$ arbitrary 
and let ${z'} = \gamma z$, which express $\Sigma$ as $\D/\gamma \Gamma \gamma^{-1}$, ${z'} \in \D$. 
We shall show that $I(\psi)$ does not depend on these choices of uniformizing coordinate $z$ or ${z'}$. 

If we use ${z'}$ as the coordinate, $I(\psi)$ is given by a $\gamma \Gamma \gamma^{-1}$-invariant holomorphic
function on $({z'}, {w'}) \in \D \times \D$, 
\[
I'(\psi)(z', w') := \sum_{n=0}^\infty {f'_n}({z'}) \left(\frac{{w'} - {z'}}{1 - \ol{z'}w'}\right)^n
\]
where 
\[
{f'_n}(z') := \varphi_n \left(\frac{\sqrt{2}d{z'}}{1-|{z'}|^2}\right)^{-n}.
\]
We shall compute $I'(\psi)$ in $(z,w)$-coordinate. Since the identification between two coordinates is given by 
\[
(z',w') = (\gamma  z, \gamma w), \quad \gamma z = \frac{\alpha z + \beta}{\ol{\beta} z + \ol{\alpha}}
\]
where $|\alpha|^2 - |\beta|^2 = 1$, the coefficients transform as 
\[
f'_n(\gamma z) = f_n(z)  \left( \frac{{\beta}\ol{z} + \alpha}{\ol{\beta}z + \ol{\alpha}} \right)^{-n}
\]
by a computation similar to that in \S\ref{equation}, hence, we have
\[
I'(\psi)(\gamma z, \gamma w) = 
\sum_{n=0}^\infty f_n(z)  \left( \frac{{\beta}\ol{z} + \alpha}{\ol{\beta}z + \ol{\alpha}} \right)^{-n} \left(\frac{{\gamma w} - {\gamma z}}{1 - \ol{\gamma z}\gamma w}\right)^n = I(\psi)(z, w). 
\]
\end{proof}

Now we are going to prove our Main Theorem. It remains to check three points (Propositions \ref{injectivity}, \ref{density} and \ref{formula}).
\begin{Proposition}
\label{injectivity}
The operator $I$ is injective.
\end{Proposition}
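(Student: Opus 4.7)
The plan is to recover the input $\psi$ from $I(\psi)$ via the uniqueness of the Taylor expansion along $D$. By construction, for $\psi \in H^0(\Sigma, K_\Sigma^{\otimes N})$ the holomorphic function $I(\psi)$ is built directly from the sequence $\widehat{I}(\psi) = \{\varphi_n\}$ of Proposition \ref{estimate}, whose $n$-th entry vanishes for $n<N$ and whose $N$-th entry is $\psi$. Since $I(\psi)$ is genuinely holomorphic (Proposition \ref{holomorphic}), the sequence $\{\varphi_n\}$ can be read off from $I(\psi)$ as its associated differentials in the sense of \S\ref{sect3}, because the coefficients $f_n(z)$ of the formal expansion used to define $I(\psi)$ are exactly the Taylor coefficients of $\widetilde{I(\psi)}(z,t)$ in $t$ at $t=0$.

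Concretely, I would take an element $\psi = \sum_{N=0}^M \psi_N$ of $R(\Sigma)$ with $\psi_N \in H^0(\Sigma, K_\Sigma^{\otimes N})$ (a finite sum, since we use the algebraic direct sum) and suppose $I(\psi) = 0$. By the $\C$-linearity of $I$ and of the Taylor expansion in the $(z,t)$-coordinate, the $n$-th associated differential of $I(\psi)$ is the sum over $N$ of the $n$-th associated differentials of $I(\psi_N)$. By Proposition \ref{estimate}, the latter contribution is zero whenever $n < N$ and equals $\psi_N$ when $n = N$.

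Assume for contradiction that $\psi \neq 0$, and let $N^*$ be the smallest index with $\psi_{N^*} \neq 0$. Then the $N^*$-th associated differential of $I(\psi)$ equals $\psi_{N^*}$: terms with $N > N^*$ contribute zero at level $N^*$, while terms with $N < N^*$ vanish since $\psi_N = 0$ by minimality. But $I(\psi) \equiv 0$ forces every associated differential of $I(\psi)$ to vanish, contradicting $\psi_{N^*} \neq 0$. No genuine obstacle is anticipated: the argument is purely formal once one observes that $\widehat{I}$ was set up in Proposition \ref{estimate} to place $\psi$ precisely at the $N$-th level of the jet of $I(\psi)$ along $D$, with all lower-order jets annihilated.
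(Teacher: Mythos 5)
Your argument is correct, but it takes a genuinely different route from the paper. The paper also begins from the observation that the $N$-th jet of $I(\psi)$ along $D$ recovers $\psi$ for $\psi \in H^0(\Sigma, K_\Sigma^{\otimes N})$, but to separate the different summands it proves that $I(H^0(\Sigma, K_\Sigma^{\otimes n_1}))$ and $I(H^0(\Sigma, K_\Sigma^{\otimes n_2}))$ are \emph{orthogonal} in $L^2_0(\Omega)$ for $n_1 \neq n_2$: by the Claim in Proposition \ref{estimate}, the associated differentials $\varphi_{1,n}$ and $\varphi_{2,n}$ lie in distinct eigenspaces of $\Box^{(0)}_n$, hence are orthogonal level by level, and the Parseval-type formula of Lemma \ref{norm} then gives $\langle\langle I(\psi_1), I(\psi_2)\rangle\rangle_0 = 0$. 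You instead argue by the order of vanishing along $D$: if $N^*$ is the lowest degree with $\psi_{N^*} \neq 0$, then the $N^*$-th associated differential of $I(\psi)$ is exactly $\psi_{N^*}$, so $I(\psi) \not\equiv 0$. Your route is more elementary --- it needs only that $\widehat{I}(\psi_N)$ vanishes below level $N$ and equals $\psi_N$ at level $N$, not the spectral decomposition --- whereas the paper's route additionally establishes the mutual orthogonality of the images, which is reused later to produce the complete orthogonal basis in the Forelli--Rudin construction of Corollary \ref{cor:ligocka}. The one point you should make explicit is why the associated differentials of the $L^2_\alpha$-limit $I(\psi)$ coincide with the $\varphi_n$ used to build the partial sums: this follows from the orthogonality of the monomials $t^n$ in the fiberwise integral underlying Lemma \ref{norm} (or from locally uniform convergence of $L^2$ holomorphic functions), and is implicitly used by the paper as well.
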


\begin{proof}
The injectivity of $I$ on each summand of $\bigoplus_{n=0}^\infty H^0(\Sigma, K_\Sigma^{\otimes n})$ is clear
since for each $\psi \in H^0(\Sigma, K_\Sigma^{\otimes n})$ the $n$-th jet of $I(\psi)$, $[I(\psi)] \in H^0(D, \mathcal{I}_D^n/\mathcal{I}_D^{n+1})$
is identified with $\psi$ itself via the isomorphism $H^0(D, \mathcal{I}_D^n/\mathcal{I}_D^{n+1}) \simeq H^0(\Sigma, K_\Sigma^{\otimes n})$.
We shall show that $I(H^0(\Sigma, K_\Sigma^{\otimes n}))$ and $I(H^0(\Sigma, K_\Sigma^{\otimes m}))$ are orthogonal in $L^2_0(\Omega)$ if $n \neq m$, from which the injectivity of $I$ follows. 

Suppose $n_1 \neq n_2$ and take $\psi_1 \in H^0(\Sigma, K_\Sigma^{\otimes n_1})$ and $\psi_2 \in H^0(\Sigma, K_\Sigma^{\otimes n_2})$. 
Denote $\widehat{I}(\psi_1) = \{ \varphi_{1,n} \}$ and $\widehat{I}(\psi_2) = \{ \varphi_{2,n} \}$.
It follows from Claim in the proof of Proposition \ref{estimate} that $\varphi_{1,n}$ and $\varphi_{2,n}$ 
have different eigenvalue for $\Box_{n}^{(0)}$, hence, they are orthogonal in $L^2(\Sigma, K_\Sigma^{\otimes n})$. 
The computation in Lemma \ref{norm} therefore yields the orthogonality 
\[
\langle \langle I(\psi_1), I(\psi_2) \rangle \rangle_0 = \pi \sum_{n=0}^\infty \frac{\langle \langle \varphi_{1,n}, \varphi_{2,n} \rangle \rangle}{n+1} = 0.
\]
This completes the proof. 
\end{proof}

Next we would like to check 
\begin{Proposition}
\label{density}
The operator $I$ has dense image in $\mathcal{O}(\Omega)$ with respect to the compact open topology. 
\end{Proposition}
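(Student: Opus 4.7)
My plan is to construct, for any $f \in \mathcal{O}(\Omega)$, a sequence $g_N \in \operatorname{Im}(I)$ converging to $f$ in the compact-open topology by an inductive jet-matching procedure.

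Given $f$ with associated differentials $\{\varphi_n^{(f)}\}_{n\geq 0}$ as in \S\ref{sect3}, I define $\psi_n \in H^0(\Sigma, K_\Sigma^{\otimes n})$ inductively as the $n$-th associated differential of $f - \sum_{k<n} I(\psi_k)$. By induction the lower-order associated differentials of this difference vanish, and the $\opa$-system in \S\ref{equation} forces $\opa \psi_n = 0$; so $\psi_n$ is genuinely holomorphic. Equivalently, $\psi_n$ is the $L^2$-orthogonal projection of $\varphi_n^{(f)}$ onto $\Ker \Box_n^{(0)} = H^0(\Sigma, K_\Sigma^{\otimes n})$, since the corrections $\widehat{I}(\psi_k)_n$ for $k < n$ lie in positive-eigenvalue eigenspaces by the Claim in the proof of Proposition \ref{estimate}. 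Set $g_N := \sum_{n=0}^N I(\psi_n) \in \operatorname{Im}(I)$; then $h_N := f - g_N \in H^0(\Omega, \mathcal{I}_D^{N+1})$.

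To show $g_N \to f$ in the compact-open topology, fix a compact $K \subset \Omega$ and set $r := \max_K |t| < 1$. For $r < s < 1$, let $K_s := \{(z,w) \in \Omega : |t| \leq s\}$, compact in $\Omega$. The fiberwise Schwarz lemma applied to $h_N$, which vanishes to order $N+1$ at $t=0$ on each fiber of $\pi|\Omega$, gives
\[
\sup_K |h_N| \leq \sup_{K_s} |h_N| \cdot (r/s)^{N+1}.
\]
It thus suffices to bound $\sup_{K_s}|h_N|$ at a rate slower than $(s/r)^N$.

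Writing $\sup_{K_s}|h_N| \leq \sup_{K_s}|f| + \sup_{K_s}|g_N|$ and using the explicit integral formula from the Main Theorem, after $\Gamma$-equivariantly reducing to $(z,w) = (0, t)$ so that
\[
I(\psi_n)(0, t) = \frac{t^n}{\Beta(n, n)} \int_0^1 \psi_n(\sigma t) [\sigma(1-\sigma)]^{n-1} d\sigma,
\]
I combine a Laplace-type asymptotic of this integral with the Bergman inequality $\sup_\Sigma|\psi_n|_g \lesssim \sqrt{n}\,\|\psi_n\|_{L^2(\Sigma)}$ on $H^0(\Sigma, K_\Sigma^{\otimes n})$ and the fiberwise Cauchy estimate $\|\psi_n\|_{L^2(\Sigma)} \leq \|\varphi_n^{(f)}\|_{L^2(\Sigma)} \lesssim M(s')/{s'}^n$ (valid for any $s' \in (s, 1)$, via $|f_n(z)| \leq M(s')/{s'}^n$ on $K_{s'}$), yielding a pointwise bound on $\sup_{K_s}|I(\psi_n)|$ of geometric type in $n$. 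A careful choice of $s, s' \in (r, 1)$ balances these geometric factors against the Schwarz decay and gives $\sup_K|h_N| \to 0$ as $N \to \infty$.

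The principal technical obstacle is this quantitative balance between the Schwarz decay $(r/s)^{N+1}$, the Cauchy growth $(1/s')^n$, and the Laplace-integral factor intrinsic to $I$, especially for $r$ close to $1$. Iterating Schwarz on a nested chain $K_{s_1} \subset K_{s_2} \subset \cdots$ with $s_k \to 1$ refines the estimate and secures $\sup_K |h_N| \to 0$ for every $r < 1$; compact-open density of $\operatorname{Im}(I)$ in $\mathcal{O}(\Omega)$ then follows.
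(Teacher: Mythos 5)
The first half of your argument is sound and close in spirit to what the paper does: your inductively defined $\psi_n$ is exactly the orthogonal projection $\Pi^{(0)}_{n,0}\varphi_n^{(f)}$ of the $n$-th associated differential onto $\Ker\Box^{(0)}_n=H^0(\Sigma,K_\Sigma^{\otimes n})$, because the corrections $\widehat{I}(\psi_k)_n$ ($k<n$) are eigenforms with positive eigenvalue $E_{k,n-k}$; this is the same spectral identification the paper establishes (there in the form $\widehat{I}(\varphi_N)=\{\Pi^{(0)}_{n,E_{N,n-N}}\varphi_n\}$). The gap is in the convergence step. Your bound on $\sup_{K_s}|I(\psi_n)|$ goes through the coordinate coefficient $\psi_n(\tau)$, and converting the intrinsic bound $\sup_\Sigma|\psi_n|_g\lesssim\sqrt{n}\,\|\psi_n\|$ into a bound on $\psi_n(\tau)$ costs a factor $\bigl(\sqrt{2}/(1-|\tau|^2)\bigr)^n$. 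Feeding this into the Laplace analysis of $\int_0^1\psi_n(\sigma t)\,\beta_n(d\sigma)$ leaves, even after the concentration of $\beta_n$ near $\sigma=1/2$, a residual factor $c^n$ with $c>1$ (already $c=\sqrt{2}$ at $t=0$, coming from $4\cdot\max_\sigma\sqrt{2}\sigma(1-\sigma)=\sqrt 2$ against the normalization $\mathrm{B}(n,n)^{-1}\sim 4^{n}$). Combined with the Cauchy bound $\|\psi_n\|\lesssim M/s'^{\,n}$ and the Schwarz factor $(r/s)^{N+1}$, the net geometric ratio is at least $\sqrt{2}\,r/s'$, which exceeds $1$ once $r\geq 1/\sqrt 2$; so the estimate as written only handles small compact sets. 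The proposed remedy of iterating Schwarz on a nested chain $K_{s_1}\subset K_{s_2}\subset\cdots$ cannot close this: each application requires an a priori bound on the larger set, where the exponential loss is strictly worse, and telescoping the Schwarz inequalities produces no new information. The "principal technical obstacle" you flag is therefore a genuine unproved step, not a routine optimization.

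The approach is salvageable if you replace the pointwise route by a global $L^2$ bound: Lemma \ref{norm}, Proposition \ref{estimate} and the ${}_3F_2$ evaluation give $\|I(\psi_n)\|_0^2=\pi c_{n,0}\|\psi_n\|^2$ with $c_{n,0}$ bounded (in fact $O(\log n/n)$), and the sub-mean-value inequality on a compact $K_s\subset\subset\Omega$ gives $\sup_{K_s}|I(\psi_n)|\leq C(s)\|I(\psi_n)\|_0\lesssim M/s'^{\,n}$; then choosing $s,s'\in(\sqrt r,1)$ makes $(r/s)^{N+1}\sum_{n\leq N}s'^{-n}\to0$ and your scheme closes. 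The paper instead sidesteps all sup-norm estimates by a duality argument: it suffices to show that no nonzero $f\in A^2_0(\Omega_\varepsilon)$ is orthogonal to $I(R(\Sigma))$, and for the lowest nonvanishing (necessarily holomorphic) associated differential $\varphi_N$ of such an $f$ one computes, using the eigenspace decomposition,
\[
\langle\langle f, I(\varphi_N)\rangle\rangle_{0,\varepsilon}
=\pi\sum_{m=0}^\infty\bigl\|\Pi^{(0)}_{N+m,E_{N,m}}\varphi_{N+m}\bigr\|^2\,\frac{(1-\varepsilon)^{N+m+1}}{N+m+1}
\geq\|\varphi_N\|^2\,\frac{(1-\varepsilon)^{N+1}}{N+1}>0,
\]
a contradiction; compact-open density then follows from the interior $L^2$-to-uniform estimate. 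You should either import the $L^2$ bound as above or switch to this Hilbert-space argument.
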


We need some additional notation. 
Consider $\Omega_\varepsilon := \{ [(z, w)] \in \Omega \mid \delta(z,w) > \varepsilon \}$ for $\varepsilon \in (0,1)$,
an exhaustion of $\Omega$, and denote by $A^2_0(\Omega_\varepsilon)$ the unweighted Bergman space on 
$\Omega_\varepsilon$ with respect to $dV$. 
From the computation in Lemma \ref{norm}, 
we know that the $L^2$ inner product of $A^2_0(\Omega_\varepsilon)$ by $\langle\langle \cdot, \cdot \rangle\rangle_{0,\varepsilon}$
is expressed as
\begin{align*}
\langle\langle f_1, f_2 \rangle\rangle_{0, \varepsilon} 
& = 2\pi \sum_{n=0}^\infty \langle\langle \varphi_{1,n}, \varphi_{2,n} \rangle\rangle \int_0^{\sqrt{1-\varepsilon}}  r^{2n+1} dr\\
& = \pi \sum_{n=0}^\infty \langle\langle \varphi_{1,n}, \varphi_{2,n} \rangle\rangle \frac{(1-\varepsilon)^{n+1}}{n+1}
\end{align*}
where  $\{ \varphi_{j,n} \}_n$ are the associated differentials of $f_j$ as in \S\ref{sect3}. 
We will also use
\[
\Pi^{(j)}_{n, \lambda}\colon L^{(0,j)}_{(2)}(\Sigma, K_\Sigma^{\otimes n}) \to  \Ker (\Box^{(j)}_{n} - \lambda I), \quad j = 0, 1, 
\]
the orthogonal projector to the $\lambda$-eigenspace of the $\opa$-Laplacian. 
Note that the orthogonal projector exists since the eigenspace is finite dimensional and forms a closed subspace thanks to the compactness of $\Sigma$.

\begin{proof}[Proof of Proposition \ref{density}]
It is enough to show that $I( R(\Sigma) )$ is dense in $A^2_0(\Omega_\varepsilon)$
since this density and Cauchy's estimate imply that $f \in \mathcal{O}(\Omega)$ is uniformly approximated on $\Omega_\varepsilon$ by functions belonging to $I( R(\Sigma) )$ for any $\varepsilon \in (0,1)$. 

Take $f \in A^2_0(\Omega_\varepsilon)$ which is orthogonal to $I(R(\Sigma))$. 
We shall show $f = 0$ by contradiction. 
Assume $f \neq 0$ at some point. 
Denote by $\{\varphi_n\}$ the associated differentials of $f$.
Since $I(R(\Sigma))$ contains constant functions, $f$ must be non-constant. 
Hence, there exists $N \in \N$ such that 
$\varphi_n = 0$ for $n < N$, and $\varphi_N \neq 0$.
This $\varphi_N$ must be a holomorphic $N$-differential since 
it coincides with the $N$-th jet of holomorphic function $f$.

We shall show $\widehat{I}(\varphi_N) = \{\Pi^{(0)}_{n, E_{N,n-N}} \varphi_n \}$ by induction. 
It holds for $n \leq N$. Assume it for $ n < N+m$ and consider the case $n = N+m$. 
Since $f - I(\varphi_N)$ is holomorphic, its associated differentials obey 
\[
\opa(\varphi_{N+m} - \varphi'_{N+m}) = -\frac{N+m-1}{\sqrt{2}} (\varphi_{N+m-1} - \varphi'_{N+m-1})\otimes  \omega
\]
where $\widehat{I}(\varphi_N) =: \{ \varphi'_n \}$. Note that $\varphi'_n \in \Ker(\Box^{(0)}_{n} - E_{N,n-N} I) $. By applying the projector $\Pi^{(1)}_{N+m, E_{N,m}}$, it follows from Lemma \ref{creation} and the assumption that 
\begin{align*}
& \opa(\Pi^{(0)}_{N+m, E_{N,m}} \varphi_{N+m} - \varphi'_{N+m}) \\
& = \frac{N+m-1}{\sqrt{2}} \Pi^{(0)}_{N+m-1, E_{N,m-1}}(\varphi_{N+m-1} - \varphi'_{N+m-1}) \otimes  \omega = 0
\end{align*}
Since $\Ker \opa \perp \Ker(\Box^{(0)}_{N+m} - E_{N,m} I)$, we conclude $\varphi'_{N+m} = \Pi^{(0)}_{N+m, E_{N,m}} \varphi_{N+m}$. 

The equality $\widehat{I}(\varphi_N) = \{\Pi^{(0)}_{n, E_{N,n-N}} \varphi_n \}$ yields a contradiction since
\begin{align*}
0 = \langle\langle f, I(\varphi_N) \rangle\rangle_{0, \varepsilon} 
& = \pi \sum_{m=0}^\infty \langle\langle \varphi_{N+m}, \Pi^{(0)}_{N+m, E_{N,m}} \varphi_{N+m} \rangle\rangle \frac{(1-\varepsilon)^{N+m+1}}{N+m+1}\\
& = \pi \sum_{m=0}^\infty \| \Pi^{(0)}_{N+m, E_{N,m}} \varphi_{N+m} \|^2  \frac{(1-\varepsilon)^{N+m+1}}{N+m+1}\\
& \geq \| \varphi_N \|^2  \frac{(1-\varepsilon)^{N+1}}{N+1} > 0.
\end{align*}
\end{proof}

The remaining thing to be shown is the expression of $I$ given in the statement of Main Theorem.
Let us see that the expression, which we temporarily denote by  $J(\psi)$, gives a $\Gamma$-invariant holomorphic function on $\D \times \D$. 

\begin{Lemma}
\label{welldef_J}
Let  $\psi \in H^0(\Sigma, K_\Sigma^{\otimes N})$, $N \geq 1$, and write $\psi = \psi(\tau) (d\tau)^{\otimes N}$ on the uniformizing coordinate $z$. 
Then,  
\[
J(\psi)(z, w) := \frac{1}{\Beta(N,N)} \int_z^w \left( \frac{(w - \tau) (\tau - z)}{w -z} \right)^{N-1} \psi(\tau) d\tau
\]
defines a $\Gamma$-invariant holomorphic function on $\D \times \D$, and it is independent of the choice of uniformizing coordinate. 
\end{Lemma}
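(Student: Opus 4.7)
My plan is to prove the three assertions in order: holomorphicity on $\D \times \D$, $\Gamma$-invariance, and coordinate independence. The thread tying everything together is that the meromorphic $1$-form $[w, \tau, z]$ is invariant under the diagonal action of $\Aut(\D)$ on its three arguments; once this is in place, both invariance claims become nearly immediate.

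For holomorphicity, I would reparametrize the integral via $\tau = z + s(w - z)$, $s \in [0,1]$. A direct substitution gives
\[
J(\psi)(z, w) = \frac{(w - z)^N}{\Beta(N, N)} \int_0^1 s^{N-1}(1-s)^{N-1} \psi(z + s(w-z))\, ds,
\]
which is visibly holomorphic on $\D \times \D$ by differentiation under the integral (the integrand is holomorphic in $(z, w)$ for each $s \in [0,1]$ and smooth jointly in $(z, w, s)$, and the $s$-interval is compact). This reparametrization has the added benefit of resolving the apparent singularity at $z = w$ and exhibiting $J(\psi)$ as vanishing to order $N$ along the diagonal. Note also that the original integrand $[(w-\tau)(\tau-z)/(w-z)]^{N-1}\psi(\tau)\, d\tau$ is a holomorphic $1$-form in $\tau$ on the simply connected disk $\D$, so the notation $\int_z^w$ is unambiguous by Cauchy's theorem.

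The core algebraic step is the identity
\[
[\gamma w, \gamma \tau, \gamma z] = [w, \tau, z] \qquad \text{for every } \gamma \in \Aut(\D),
\]
which I would verify directly using
\[
\gamma u - \gamma v = \frac{u - v}{(\ol\beta u + \ol\alpha)(\ol\beta v + \ol\alpha)} \quad \text{and} \quad \gamma^* d\tau = \frac{d\tau}{(\ol\beta \tau + \ol\alpha)^2};
\]
the Möbius cocycles collect and cancel. Once this is in hand, the integrand can be rewritten as $\psi(\tau)(d\tau)^{\otimes N}/[w, \tau, z]^{\otimes (N-1)}$, a ratio of two $\Aut(\D)$-equivariant tensors. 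For $\gamma \in \Gamma$, the $N$-differential $\psi(\tau)(d\tau)^{\otimes N}$ is $\Gamma$-invariant because $\psi$ descends to $\Sigma$, and $[w, \tau, z]^{\otimes (N-1)}$ is invariant by the identity just established; the change of variables $\sigma = \gamma\tau$ in $J(\psi)(\gamma z, \gamma w)$, combined with path-independence to handle the fact that $\gamma^{-1}$ of a straight segment is a circular arc, converts the integral back to $J(\psi)(z,w)$, yielding $\Gamma$-invariance.

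Coordinate independence is a corollary of the same $\Aut(\D)$-equivariance: a change of uniformizing coordinate $z \mapsto \gamma z$ with $\gamma \in \Aut(\D)$ merely conjugates the Fuchsian group and relabels $(z, w, \tau)$ by $\gamma$, and the invariance of the integrand forces $J(\psi)$ to be unchanged. The main obstacle is the direct verification of the invariance of $[w, \tau, z]$: although each individual identity is standard, one has to juggle four cocycles (three from the difference factors $w-z$, $w-\tau$, $\tau-z$ and one from $d\tau$) and confirm they cancel cleanly, the latter relying on $|\alpha|^2 - |\beta|^2 = 1$. Everything else in the argument is bookkeeping once this cornerstone is in place.
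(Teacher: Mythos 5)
Your proposal is correct and follows essentially the same route as the paper: the $\Aut(\D)$-invariance of $[w,\tau,z]$ (which the paper justifies by viewing it as a degenerate cross ratio, while you verify it by the cocycle identities — a cosmetic difference), path-independence of the integral, the segment reparametrization $\tau = z+s(w-z)$ yielding the beta-distribution form and the order-$N$ vanishing along the diagonal, and the change of variables $\tau=\gamma\tau'$ for both $\Gamma$-invariance and coordinate independence. No gaps.
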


\begin{proof}
Notice that 
\[
[w, \tau, z] := \frac{(w -z) d\tau}{(w - \tau) (\tau - z)} = \left( \frac{1}{w - \tau} + \frac{1}{\tau - z}\right) d\tau
\]
is a meromorphic 1-form in $(z, w, \tau) \in \D \times \D \times \D$ and invariant under the simultaneous action of $\Aut(\D)$ on $\D \times \D \times \D$
since it is a degenerate form of the cross ratio. Hence, when we fix distinct $z, w \in \D$, the integrand
\[
\frac{1}{\Beta(N,N)}\left(\frac{(w - \tau) (\tau - z)}{(w -z) d\tau}\right)^{\otimes (N-1)} \otimes \psi(\tau) (d\tau)^{\otimes N}
\]
is a holomorphic 1-form in $\tau$, and the value $J(\psi)(z,w)$ does not depend on the choice of integral path 
from $z$ to $w$. 
Hence, we see that $J(\psi)(z,w)$ is holomorphic in on $\D \times \D \setminus \Delta$ from its definition.

Along $\Delta$, $J(\psi)$ behaves as follows:
\begin{align*}
J(\psi)(z, w) 
& = \int_{z}^{z+(w - z)} \frac{1}{\Beta(N,N)}\left( \frac{(z + (w - z) - \tau) (\tau - z)}{(w - z)} \right)^{N-1} \psi(\tau) d\tau \\
& = \int_{0}^{1} \frac{1}{\Beta(N,N)}\left( \frac{((w - z) - (w - z) s) (w - z) s}{(w - z)} \right)^{N-1} \psi(z+(w - z) s) (w - z) ds \\
& = (w - z)^N \int_{0}^{1} \frac{1}{\Beta(N,N)}\left( (1- s) s \right)^{N-1} \psi(z+(w - z) s)  ds \\
& = (w - z)^N \int_{0}^{1}  \psi(z+(w - z) s)  \beta_N(ds)
\end{align*}
where we chosen the integral path as $\tau = z + s (w - z)$, $0 \leq s \leq 1$, and 
\[
\beta_N(ds) := \frac{s^{N-1}(1 - s)^{N-1} ds}{\Beta(N,N)} = \frac{(2N-1)!}{(N-1)!(N-1)!}{s^{N-1}(1 - s)^{N-1} ds}
\]
denotes the beta distribution with parameters $(N, N)$ on $s \in [0,1]$. 
Hence, $J(\psi)$ has zero of order $N$ along $\Delta$ being holomorphic on $\D \times \D$.

The invariance of $J(\psi)$ under $\Gamma$ follows from a computation: for each $\gamma \in \Gamma$,
\begin{align*}
&(\gamma^*J(\psi))(z,w) = J(\psi)(\gamma(z),\gamma(w)) \\
& = \int_{\gamma(z)}^{\gamma(w)} \frac{1}{\Beta(N,N)}\left( \frac{(\gamma(w) - \tau) (\tau - \gamma(z))}{(\gamma(w) -\gamma(z)) d\tau} \right)^{\otimes(N-1)} \otimes \psi(\tau) (d\tau)^{\otimes N} \\
& = \int_{z}^{w} \frac{1}{\Beta(N,N)}\left( \frac{(\gamma(w) - \gamma(\tau')) (\gamma(\tau') - \gamma(z))}{(\gamma(w) -\gamma(z)) d\gamma(\tau')} \right)^{\otimes(N-1)} \otimes \psi(\gamma(\tau')) (d\gamma(\tau'))^{\otimes N} \\
& = \int_{z}^{w} \frac{1}{\Beta(N,N)}\left( \frac{(w - \tau') (\tau' - z)}{(w -z) d\tau'} \right)^{\otimes(N-1)} \otimes \gamma^*\left(\psi(\tau) (d\tau)^{\otimes N}\right)\\
&= J(\gamma^*\psi)(z, w) = J(\psi)(z,w).
\end{align*}
During the computation, we used $\Gamma$-invariance of $\psi(\tau)(d\tau)^{\otimes N}$ 
and $\Aut(\D)$-invariance of $[w, \tau, z] $, 
 and we change the variable of the integral by $\tau = \gamma(\tau')$. 

We take another uniformizing coordinate ${z'}$ of $\Sigma$ given by ${z'} = \gamma z$ for arbitrary $\gamma \in \Aut(\D)$,
and denote by $J'(\psi)$ the path integral computed using $z'$-coordinate. 
Then, the given differential $\psi$ is expressed by pull-back $(\gamma^{-1})^* \psi \in H^0(\D, (K_\D)^{\otimes N})$ in $z'$-coordinate, 
hence, the same computation as in the proof for $\Gamma$-invariance of $J(\psi)$ yields 
\[
 J(\psi)(\gamma^{-1}z, \gamma^{-1}w) = J'(\psi)(z, w),
\]
which means that $J(\psi)$ does not depend on the choices of uniformizing coordinate $z$ or $z'$.
\end{proof}

\medskip

Now we are going to show that $J$ actually gives an expression for $I$. 

\begin{Proposition}
\label{formula}
For any $\psi \in H^0(\Sigma, K_\Sigma^{\otimes N})$, $N \geq 1$, $I(\psi) = J(\psi)$ holds. 
\end{Proposition}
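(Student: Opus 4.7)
\emph{Plan.} The plan is to prove $I(\psi) = J(\psi)$ by verifying that the two $\Gamma$-invariant holomorphic functions on $\D \times \D$ share the same Taylor expansion along $D$ in the coordinate $t = (w-z)/(1-\ol{z} w)$; since a holomorphic function on $\Omega$ is determined by its Taylor expansion along $D$, the equality will follow. Both sets of associated differentials $\{\varphi_n^I\}$ and $\{\varphi_n^J\}$ from \S\ref{sect3} are well-defined, the former by the construction in \S\ref{sect:proof} and the latter by Lemma \ref{welldef_J}.

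\emph{Initial data and cascade.} Starting from the representation $J(\psi)(z,w) = (w-z)^N \int_0^1 \psi(z + (w-z)s)\,\beta_N(ds)$ obtained in the proof of Lemma \ref{welldef_J}, and substituting $w - z = t(1-|z|^2)/(1+\ol{z} t)$, one reads off that $\varphi_n^J = 0$ for $n < N$ and that $\varphi_N^J = \psi$, matching the initial data of $\widehat{I}(\psi)$. Being holomorphic on $\Omega$, the function $J(\psi)$ has associated differentials that automatically satisfy the same cascade of $\opa$-equations $\opa \varphi_n^J = -(n-1)/\sqrt{2}\,\varphi_{n-1}^J \otimes \omega$ derived in \S\ref{equation}.

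\emph{Spectral matching -- the main step.} By the Claim inside the proof of Proposition \ref{estimate}, $\widehat{I}(\psi)$ is characterized among cascade solutions with these initial data by the spectral property that each $\varphi_n^I$ lies in the $E_{N,n-N}$-eigenspace of $\Box^{(0)}_n$, equivalently, $\varphi_n^I \perp H^0(\Sigma, K_\Sigma^{\otimes n})$ for $n > N$. Inductively, if $\varphi_k^J = \varphi_k^I$ for all $k < n$, then $\opa(\varphi_n^J - \varphi_n^I) = 0$ and $h_n := \varphi_n^J - \varphi_n^I \in H^0(\Sigma, K_\Sigma^{\otimes n})$, so the remaining task is to prove $h_n = 0$. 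The preferred strategy is to exploit the $\Aut(\D)$-equivariance of the path integral (implicit in the proof of Lemma \ref{welldef_J}) in order to identify $\varphi_n^J$ with the correct $PSU(1,1)$-isotypic component of $L^2(\Sigma, K_\Sigma^{\otimes n})$, which coincides with the $E_{N,n-N}$-eigenspace of $\Box^{(0)}_n$ and is orthogonal to $H^0$. An alternative, entirely computational route is to expand the integral formula as a power series in $t$ by Taylor-expanding $\psi(z+(w-z)s)$ around $z$ and using the binomial expansion of $(1+\ol{z} t)^{-m}$, then match coefficients with those produced by $\widehat{I}(\psi)$; this reduces the claim to combinatorial identities involving the generalized hypergeometric function ${}_3F_2$, in the spirit of Corollary \ref{cor:ligocka}. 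The main obstacle is precisely this last step of ruling out $h_n$; once it is settled, the equality of Taylor expansions along $D$ yields $I(\psi) = J(\psi)$.
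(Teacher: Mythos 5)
Your reduction is set up correctly: both $I(\psi)$ and $J(\psi)$ are holomorphic on $\Omega$, so their associated differentials satisfy the same cascade of $\opa$-equations with the same initial data, and since the $L^2$-minimal solution at each stage is the unique solution orthogonal to $\Ker\opa = H^0(\Sigma, K_\Sigma^{\otimes n})$, the whole proposition reduces to showing $\varphi^J_n \perp H^0(\Sigma, K_\Sigma^{\otimes n})$ for $n > N$ (equivalently $h_n = 0$). But that reduction is the easy half, and you stop exactly at the hard half: you name the vanishing of $h_n$ as ``the main obstacle'' and offer two strategies without executing either. Strategy (a) has a conceptual problem as stated: $\Aut(\D)$ does not act on $\Sigma = \D/\Gamma$, so $L^2(\Sigma, K_\Sigma^{\otimes n})$ carries no $PSU(1,1)$-representation and there are no ``isotypic components'' to compare with the $E_{N,n-N}$-eigenspace of $\Box^{(0)}_n$; any such argument would have to be run on the universal cover and descended, which is a substantial piece of work you have not supplied. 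Strategy (b) is circular as described: to ``match coefficients with those produced by $\widehat{I}(\psi)$'' you must first compute those coefficients in closed form, and that computation is precisely the content that is missing.

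For comparison, the paper closes this gap by a direct computation rather than an orthogonality argument. Using Lemmas \ref{welldef_I} and \ref{welldef_J} it suffices to verify the identity on the slice $\{0\}\times\D$; there the iterated operator $\opa^*_{N+m} G^{(1)}_{N+m}(\,\cdot\otimes\omega)$ collapses, because $G^{(1)}_{N+m}$ acts as the scalar $1/E_{N,m}$ on the eigenforms produced by the Claim in Proposition \ref{estimate} and every term containing a factor $\ol{z}$ dies at $z=0$. This gives the explicit Taylor coefficients $f_{N+m}(0)$, hence an ODE $\frac{\pa^N}{\pa w^N}\bigl(w^{N-1}I(\psi)(0,w)\bigr) = \frac{(2N-1)!}{(N-1)!} w^{N-1}\psi(w)$, whose solution by iterated integration is exactly the beta-distribution integral defining $J(\psi)(0,w)$. (No ${}_3F_2$ identity enters here; that function only appears in the norm computation.) If you want to salvage your framework, the cleanest fix is to adopt this computation of $f_{N+m}(0)$ as the missing input; as written, your argument does not establish the proposition.
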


\begin{proof}
It is enough to show it on $\{0\} \times \D$, namely, 
\[
I(\psi)(0, w) = J(\psi)(0,w) =  w^N \int_0^1  \psi(s w) \beta_N(ds)
\]
for each $w \in \D$ since our arguments do not depend on the choice of uniformizing coordinate $z$.
More precisely, to show
\[
I(\psi)(z_0, w_0) = J(\psi)(z_0,w_0)
\]
at $(z_0, w_0) \in \D \times \D$, we take $\gamma_0 \in \Aut(\D)$ such that 
$\gamma_0(0) = z_0$, and replace the original uniformization $\D/\Gamma$ 
by another uniformization $\D/\gamma_0^{-1}\Gamma\gamma_0$. 
Lemma \ref{welldef_I} and \ref{welldef_J} yield
\begin{align*}
I(\psi)(z_0, w_0) &= I'(\psi)(0, \gamma_0^{-1}(w_0)),\\
J(\psi)(z_0, w_0) &= J'(\psi)(0, \gamma_0^{-1}(w_0))
\end{align*}
respectively, where $I'$ and $J'$ denote the operators computed in the new uniformizing coordinate. 
Hence, showing $I(\gamma_0^*\psi)(0, \gamma_0^{-1}(w_0)) =  J(\gamma_0^*\psi)(0, \gamma_0^{-1}(w_0))$ is enough to conclude. 

\medskip

We shall write down $I(\psi)(0, w)$ explicitly. 
The $L^2$ minimal solution $\widehat{I}(\psi) =\{ \varphi_n \}$ satisfies
$\varphi_n = 0$ ($n < N$), $\varphi_N = \psi$, and
\begin{align*}
\varphi_{N+m} 
&= -\frac{N+m-1}{\sqrt{2} E_{N,m}}  \opa^*_{N+m}\left(\varphi_{N+m-1} \otimes \omega\right)\\
&= -\frac{\sqrt{2} (N+m-1)}{m(2N+m-1)}  \opa^*_{N+m}\left(\varphi_{N+m-1} \otimes \omega\right).
\end{align*}
We write 
\[
\varphi_{N+m-1} = f_{N+m-1} \left(\frac{\sqrt{2}dz}{1-|z|^2}\right)^{\otimes (N+m-1)} = f_{N+m-1} \sqrt{g}^{N+m-1} (dz)^{\otimes (N+m-1)}.
\]
Then, 
\begin{align*}
& \opa^*_{N+m}\left(\varphi_{N+m-1} \otimes \omega\right)\\
 &=  \frac{-1}{g g^{-(N+m)}} \frac{\pa (g^{-(N+m)} f_{N+m-1} \sqrt{g}^{N+m-1} g )}{\pa z}   (dz)^{\otimes (N+m)} \\
 &=  -g^{N+m-1} \frac{\pa (\sqrt{g}^{-(N+m-1)} f_{N+m-1} )}{\pa z}   (dz)^{\otimes (N+m)} \\
 &=  -\sqrt{g}^{N+m-2} \frac{\pa (\sqrt{g}^{-(N+m-1)} f_{N+m-1} )}{\pa z}   (\sqrt{g} dz)^{\otimes (N+m)}.
\end{align*}
Hence,
\begin{align*}
f_{N+m}(z)
&= \frac{\sqrt{2} (N+m-1)}{m(2N+m-1)}\sqrt{g}^{N+m-2} \frac{\pa (\sqrt{g}^{-(N+m-1)} f_{N+m-1} )}{\pa z}\\
&= \frac{\sqrt{2} (N+m-1)}{m(2N+m-1)} \frac{\sqrt{2} (N+m-2)}{(m-1)(2N+m-2)} \sqrt{g}^{N+m-2} \frac{\pa}{\pa z} \frac{1}{g} \frac{\pa (\sqrt{g}^{-(N+m-2)} f_{N+m-2} )}{\pa z} \\
&= \frac{(2N-1)!}{(N-1)!} \frac{\sqrt{2}^m (N+m-1)!}{m! (2N+m-1)!} \sqrt{g}^{N+m-2} \left(\frac{\pa}{\pa z} \frac{1}{g}\right)^{\circ (m-1)} \frac{\pa \sqrt{g}^{-N} f_{N}}{\pa z}.  
\end{align*}

Using explicit expression of $g$, 
\begin{align*}
& f_{N+m}(z) \\
&= \frac{(2N-1)!}{(N-1)!} \frac{(N+m-1)!}{m! (2N+m-1)!} \frac{1}{(1-|z|^2)^{N+m-2}} \left(\frac{\pa}{\pa z} {(1-|z|^2)^2} \right)^{\circ (m-1)} \frac{\pa}{\pa z} \left({1-|z|^2} \right)^{N} f_{N}
\end{align*}
follows, and at $z = 0$, 
\begin{align*}
f_{N+m}(0)
&= \frac{(2N-1)!}{(N-1)!} \frac{(N+m-1)!}{m! (2N+m-1)!} \frac{\pa^m f_{N}}{\pa z^{m}}(0).
\end{align*}
Note that we do not have lower order derivatives of $f_N$ in this expression 
since the $z$-derivative of $(1-|z|^2)$ vanishes at $z =0$.

We therefore showed that the holomorphic function $I(\psi)$ has the expansion 
\[
I(\psi)(0, w) = \frac{(2N-1)!}{(N-1)!} \sum_{m=0}^\infty \frac{(N+m-1)!}{(2N+m-1)! } \frac{1}{m!} \frac{\pa^m \psi}{\pa z^{m}}(0) w^{N+m}.
\]
Hence, it satisfies the differential equation
\begin{align*}
\frac{\pa^N}{\pa w^N} \left( w^{N-1} I(\psi)(0, w) \right)
& = \frac{(2N-1)!}{(N-1)!} \frac{\pa^N}{\pa w^N} \sum_{m=0}^\infty \frac{(N+m-1)!}{(2N+m-1)!} \frac{1}{m!} \frac{\pa^m \psi}{\pa z^{m}}(0) w^{2N+m-1}\\
& = \frac{(2N-1)!}{(N-1)!}  \sum_{m=0}^\infty \frac{1}{m!} \frac{\pa^m \psi}{\pa z^{m}}(0) w^{N+m-1}\\
& = \frac{(2N-1)!}{(N-1)!}  w^{N-1} \psi(w).
\end{align*}
Using an iterated integral on a path from $0$ to $w$, in particular, taking the path as the segment joining $0$ and $w$, we obtain
\begin{align*}
I(\psi)(0, w) 
& = \frac{(2N-1)!}{(N-1)!}  \frac{1}{w^{N-1}} \int_0^w d\sigma_{N} \dots \int_0^{\sigma_3} d\sigma_2 \int_0^{\sigma_2} \sigma_1^{N-1} \psi(\sigma_1) d\sigma_1 \\
& = \frac{(2N-1)!}{(N-1)!}  \frac{1}{w^{N-1}} \int_0^1 w ds_N \dots \int_0^{s_3} w ds_2 \int_0^{s_2} (s_1 w)^{N-1} \psi(w s_1) w ds_1 \\
& = \frac{(2N-1)!}{(N-1)!}  w^N \int_{\{ 0 < s_1 < \dots < s_N < 1\}} s_1^{N-1} \psi(s_1 w )  ds_1 ds_2 \dots ds_N \\
& = \frac{(2N-1)!}{(N-1)!}  w^N \int_0^1 ds_1 s_1^{N-1} \psi(s_1 w) \int_{s_1}^1 ds_2 \dots \int_{s_{N-1}}^1 ds_N \\
& = \frac{(2N-1)!}{(N-1)!}  w^N \int_0^1  \frac{s_1^{N-1}(1 - s_1)^{N-1} }{(N-1)!} \psi(s_1 w) ds_1\\
& =  w^N \int_0^1  \psi(s w) \beta_N(ds).
\end{align*}
We therefore have $I(\psi)(0, w) = J(\psi)(0, w)$, and completed the proof for Main Theorem. 
\end{proof}

\section{Applications}

We shall give two applications of our description of $\mathcal{O}(\Omega)$.
\subsection{The vanishing of Hardy space}
\label{Liouville}
As we have shown, the intersection of weighted Bergman spaces $\bigcap_{\alpha > -1} A^2_{\alpha}(\Omega)$ is infinite dimensional.
However, 

\setcounter{Corollary}{1}
\begin{Corollary}
The Hardy space $A^2_{-1}(\Omega)$ consists only of constant functions.
\end{Corollary}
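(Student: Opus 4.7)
The plan is to extend the norm identity of Lemma~\ref{norm} to the critical endpoint $\alpha = -1$ and then exploit the divergent-series behavior of the $L^2$-minimal solution already exhibited in Proposition~\ref{estimate}. Concretely, the same polar-coordinate computation as in Lemma~\ref{norm} (with the cancellation $\Gamma(n+1)/\Gamma(n+2+\alpha) = 1$ at $\alpha = -1$, after absorbing the diverging pre-factor $1/\Gamma(\alpha+1)$ into a renormalized limit $\alpha \to -1^+$ to define the Hardy norm) should yield
\[
\|f\|^2_{-1} = \pi \sum_{n=0}^\infty \|\varphi_n\|^2
\]
for the associated differentials $\{\varphi_n\}$ of $f$. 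In particular, $f \in A^2_{-1}(\Omega)$ forces $\sum_n \|\varphi_n\|^2 < \infty$.

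Suppose for contradiction that $f \in A^2_{-1}(\Omega)$ is non-constant. If $\varphi_n = 0$ for every $n \geq 1$ then $f = \varphi_0$ descends to a holomorphic function on $\Sigma$, hence is constant; so one can take the smallest $N \geq 1$ with $\varphi_N \neq 0$. From the recurrence in \S\ref{sect3}, $\opa \varphi_N = -\tfrac{N-1}{\sqrt{2}}\varphi_{N-1}\otimes\omega = 0$, so $\psi := \varphi_N$ is a non-zero element of $H^0(\Sigma, K_\Sigma^{\otimes N})$. Applying the projection $\Pi^{(1)}_{N+m, E_{N, m}}$ to the equation $\opa \varphi_{N+m} = -\tfrac{N+m-1}{\sqrt{2}} \varphi_{N+m-1} \otimes \omega$, using Lemma~\ref{creation} together with the readily verified identity $E_{N, m} - (N+m-1) = E_{N, m-1}$, one sees that the projected sequence $\{\Pi^{(0)}_{N+m, E_{N, m}} \varphi_{N+m}\}_{m \geq 0}$ satisfies the same recurrence as $\widehat{I}(\psi)$ with the same initial datum $\psi$ at level $N$. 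Since $E_{N, m} > 0$ for $m \geq 1$ and $\Ker \opa$ sits in the zero eigenspace, the solution within the positive $E_{N, m}$-eigenspace is unique, so $\Pi^{(0)}_{N+m, E_{N, m}} \varphi_{N+m} = \bigl(\widehat{I}(\psi)\bigr)_{N+m}$.

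By orthogonality of eigenspaces, $\|\varphi_{N+m}\|^2 \geq \|\bigl(\widehat{I}(\psi)\bigr)_{N+m}\|^2$, and Proposition~\ref{estimate} gives
\[
\|f\|^2_{-1} \geq \pi \|\psi\|^2 \frac{(2N-1)!}{\{(N-1)!\}^2} \sum_{m=0}^\infty \frac{\{(N+m-1)!\}^2}{m!(2N+m-1)!}.
\]
The Stirling asymptotic at the end of the proof of Proposition~\ref{holomorphic} shows that the $m$-th summand is $\approx C_N/m$, producing a harmonic-type divergence that contradicts $\|f\|^2_{-1} < \infty$. Hence $f$ is constant, and the ``in particular'' clause follows because bounded holomorphic functions on the finite-volume domain $\Omega$ lie automatically in $A^2_{-1}(\Omega)$.

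The main obstacle will be giving a rigorous meaning to the Hardy norm at the critical endpoint $\alpha = -1$, where the weighted integral $\int_\Omega |f|^2\, \delta^{-1}\, dV$ is not absolutely convergent; one should interpret the identity $\|f\|^2_{-1} = \pi \sum_n \|\varphi_n\|^2$ either as the very definition of the Hardy norm or as a suitable renormalized limit of Lemma~\ref{norm} as $\alpha \to -1^+$. All remaining ingredients --- the eigenspace projections, the identification with $\widehat{I}(\psi)$, and the divergent-series estimate --- are already in place in Sections~\ref{sect4} and~\ref{sect:proof}.
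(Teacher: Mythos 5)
Your proof is correct and takes essentially the same route as the paper's: the paper likewise identifies the projected components $\Pi^{(0)}_{N+m,E_{N,m}}\varphi_{N+m}$ with the $L^2$-minimal solution starting from $\psi=\varphi_N$, bounds $\|\varphi_{N+m}\|^2$ from below by their norms, and concludes from the $\approx C/m$ Stirling asymptotics that the series diverges. The one point you flag as an obstacle --- giving meaning to the norm at $\alpha=-1$ --- does not arise, since the paper simply takes $\|f\|_{-1}^2 = \pi\sum_{n}\|\varphi_n\|^2$ as the definition of the Hardy norm, exactly as you suggest as one of your two options.
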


The Hardy space of $\Omega$ is defined by
\[
A^2_{-1}(\Omega) := \{ f \in \mathcal{O}(\Omega) \mid \| f \|_{-1} < \infty \}.
\]
Here the Hardy norm $\| \cdot \|_{-1}$ is defined by
\[
\| f \|^2_{-1} 
:= \pi \sum_{n=0}^\infty \| \varphi_n \|^2.
\]
where $\{ \varphi_n\}$ are the associated differentials  of $f$.
We remark that our notation for Hardy space $A^2_{-1}(\Omega)$ 
is consistent with the notation for weighted Bergman spaces
$A^2_{\alpha}(\Omega)$ since Lemma \ref{norm} implies
\[
\| f \|_{-1}  = \lim_{\alpha \searrow -1} \|f \|_{\alpha}.
\]

\begin{proof}
Let $f \in \mathcal{O}(\Omega)$. We denote the associated differentials of $f$ by $\{\varphi_n\}$. 
If $f$ is non-constant, there exists $N \in \N$ such that $\varphi_N \neq 0$, and $\varphi_n = 0$ for $n < N$. 
From the argument in \S\ref{equation}, $\psi := \varphi_N$ is a holomorphic $N$-differential on $\Sigma$. 
Then, repeating the computation in the proof of Proposition \ref{holomorphic} with the orthogonal projectors 
\[
\Pi^{(0)}_{n, \lambda}\colon L^{(0,0)}_{(2)}(\Sigma, K_\Sigma^{\otimes n}) \to  \Ker (\Box^{(0)}_{n} - \lambda I),
\]
we have
\begin{align*}
\| \Pi^{(0)}_{N+m,E_{N,m}} \varphi_{N+m} \|^2
& = \frac{(N+m-1)^2}{2E_{N,m}} \| \Pi^{(0)}_{N+m-1, E_{N,m-1}} \varphi_{N+m-1} \|^2 \\
& = \frac{(2N-1)!}{\{(N-1)!\}^2} \frac{\{(N+m-1)!\}^2}{m! (2N+m-1)!} \| \Pi^{(0)}_{N, 0}\varphi_{N}\|^2
\end{align*}
for $m \geq 1$. Since $\| \Pi^{(0)}_{N, 0}\varphi_{N}\| = \| \varphi_{N}\| \neq 0$, 
it follows from the proof of Proposition \ref{convergence} that 
\begin{align*}
\| \Pi^{(0)}_{N+m,E_{N,m}} \varphi_{N+m} \|^2
& \approx C \frac{1}{m}
\end{align*}
as $m \to \infty$ for some constant $C > 0$. Hence,
\[
\| f \|^2_{-1} = \sum_{n=0}^\infty \| \varphi_n \|^2 
= \sum_{m=0}^\infty \| \varphi_{N+m} \|^2 
\geq \sum_{m=0}^\infty \|  \Pi^{(0)}_{N+m,E_{N,m}} \varphi_{N+m} \|^2 
\]
cannot converge.
\end{proof}

\subsection{Forelli--Rudin construction for $\Omega$}
Let $\alpha > -1$. 
We shall express the weighted Bergman kernel of $A^2_{\alpha}(\Omega)$
in terms of the Bergman kernels of $H^0(\Sigma, K_\Sigma^{\otimes N})$. 

\begin{Corollary}
For $\alpha > -1$, the weighted Bergman kernel $B_\alpha((z,w); (z',w'))$ of $A^2_\alpha(\Omega)$ has the following expression
\begin{align*}
& B_\alpha((z, w); (z', w')) \\
&=\frac{\Gamma(\alpha+2)}{\pi^2 (4g-4)} + \frac{1}{\pi} \sum_{N=1}^\infty \frac{1}{c_{N,\alpha}} \frac{1}{\Beta(N,N)^2} \int_{\tau \in \overline{zw}} \int_{\tau' \in \overline{z'w'}} \frac{B_N(\tau, \tau') (d\tau \otimes \overline{d\tau'})^{\otimes N}}{([w, \tau, z] \otimes \overline{[w', \tau', z']})^{\otimes (N-1)}}
\end{align*}
where  $B_N(\tau,\tau') (d\tau \otimes \overline{d\tau'})^{\otimes N}$  is the Bergman kernel
of $K_\Sigma^{\otimes N}$, 
\[
c_{N,\alpha} := \frac{\Gamma(N+1)}{\Gamma(N+2+\alpha)} {}_3F_2 \left(\begin{matrix}
N+1, N, N \\
2N, N+2+\alpha
\end{matrix}
; 1
\right),
\]
and $g$ is the genus of $\Sigma$. 
\end{Corollary}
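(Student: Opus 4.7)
The plan is to use the density of $I(R(\Sigma))$ in $A^2_\alpha(\Omega)$ together with the pairwise orthogonality of the subspaces $I(H^0(\Sigma, K_\Sigma^{\otimes N}))$ to assemble an orthonormal basis of $A^2_\alpha(\Omega)$, then substitute the integral expression of $I$ from the Main Theorem into the standard reproducing-kernel expansion $B_\alpha = \sum_k e_k\overline{e_k}$.

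First I would replay the density argument of Proposition \ref{density} in the weighted setting: if $f \in A^2_\alpha(\Omega)$ is orthogonal to $I(R(\Sigma))$, then the same inductive identification $\widehat{I}(\varphi_N) = \{\Pi^{(0)}_{n, E_{N, n-N}} \varphi_n\}_n$ combined with Lemma \ref{norm} yields
\[
0 = \langle\langle f, I(\varphi_N)\rangle\rangle_\alpha \geq \pi \|\varphi_N\|^2 \frac{\Gamma(N+1)}{\Gamma(N+2+\alpha)} > 0,
\]
a contradiction unless $f \equiv 0$. A similar application of Lemma \ref{norm} shows that the subspaces $I(H^0(\Sigma, K_\Sigma^{\otimes N}))$ are mutually orthogonal in $A^2_\alpha(\Omega)$: by the Claim inside Proposition \ref{estimate} their associated differentials lie in distinct eigenspaces of each $\Box^{(0)}_n$, and this spectral separation is preserved under the strictly positive weights $\Gamma(n+1)/\Gamma(n+2+\alpha)$ appearing in Lemma \ref{norm}.

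Next I would compute $\|I(\psi)\|_\alpha^2$ for $\psi \in H^0(\Sigma, K_\Sigma^{\otimes N})$. For $N \geq 1$, Lemma \ref{norm} and Proposition \ref{estimate} give
\[
\|I(\psi)\|_\alpha^2 = \pi \|\psi\|^2 \frac{(2N-1)!}{\{(N-1)!\}^2} \sum_{m=0}^\infty \frac{\Gamma(N+m+1)}{\Gamma(N+m+2+\alpha)} \frac{\{(N+m-1)!\}^2}{m!(2N+m-1)!} = \pi c_{N,\alpha} \|\psi\|^2
\]
by the very same Pochhammer-symbol manipulation carried out in the proof of Proposition \ref{convergence}. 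For $N = 0$, the element $I(\psi) = \psi$ is a constant and Lemma \ref{norm} collapses to $\|I(\psi)\|_\alpha^2 = \pi |\psi|^2 \,\mathrm{Area}(\Sigma,g)/\Gamma(\alpha+2)$; since our normalized Poincar\'e metric has constant curvature $-1$, Gauss--Bonnet yields $\mathrm{Area}(\Sigma, g) = 4\pi(g-1)$, producing exactly the constant term $\Gamma(\alpha+2)/(\pi^2(4g-4))$.

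Finally, for each $N \geq 1$ I would pick an $L^2$-orthonormal basis $\{\psi_{N,j}\}_j$ of $H^0(\Sigma, K_\Sigma^{\otimes N})$; the rescaled family $\{I(\psi_{N,j})/\sqrt{\pi c_{N,\alpha}}\}_{N \geq 1, j}$, augmented by the unit constant $\sqrt{\Gamma(\alpha+2)/(\pi^2(4g-4))}$ for $N = 0$, is then an orthonormal basis of $A^2_\alpha(\Omega)$. Inserting the integral formula of the Main Theorem into $\sum_k e_k(z,w)\overline{e_k(z',w')}$, pulling the finite sum $\sum_j \psi_{N,j}(\tau)\overline{\psi_{N,j}(\tau')} = B_N(\tau, \tau')$ inside the double path integral, and summing over $N$ yields the claimed expression. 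The only real obstacle is the first step -- carrying the density and orthogonality arguments across from the unweighted setting of Proposition \ref{density} to weighted $L^2_\alpha$ -- but this is essentially just the observation that the weights produced by Lemma \ref{norm} are strictly positive and depend only on $n$, hence do not disturb the spectral decomposition of $\Box^{(0)}_n$ on which the orthogonality rests.
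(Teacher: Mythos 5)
Your proposal is correct and follows essentially the same route as the paper: an orthonormal basis of $A^2_\alpha(\Omega)$ built from $\{I(\psi_{N,j})\}$, the norm identity $\|I(\psi)\|^2_\alpha = \pi c_{N,\alpha}\|\psi\|^2$ from Lemma \ref{norm} and Proposition \ref{convergence}, Gauss--Bonnet for the constant term, and substitution of the integral formula into $\sum_k e_k \overline{e_k}$. The only difference is cosmetic: the paper simply asserts completeness and orthogonality ``in view of \S\ref{sect:proof}'', whereas you spell out the weighted versions of the density and spectral-orthogonality arguments, which is a legitimate filling-in of the same reasoning.
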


\begin{proof}
We write down a complete orthonormal basis of $A^2_\alpha(\Omega)$ using the map $I$. 
Pick an orthonormal basis $\{ \psi_{N,j} = \psi_{N,j}(\tau)(d\tau)^{\otimes N} \}_{j=1}^{d_N}$ for each $H^0(\Sigma, K_\Sigma^{\otimes N})$, where $d_N := \dim H^0(\Sigma, K_\Sigma^{\otimes N})$. 
Note that $d_0 = 1$ and $\psi_{0,1} = 1/\sqrt{\mathrm{Vol}(\Sigma)} =  1/\sqrt{\pi (4g-4)}$ from the Gauss--Bonnet theorem. 
Recall that the Bergman kernel $B_N$ of $K_\Sigma^{\otimes N}$ is given by
\[
B_N = B_N(\tau, \tau')  (d\tau \otimes \overline{d\tau'})^{\otimes N} = \sum_{j=1}^{d_N} \psi_{N,j}(\tau) (d\tau)^{\otimes N} \otimes \overline{\psi_{N,j}(\tau') (d\tau')^{\otimes N}}.
\]

Now we collect all the holomorphic functions $\{ I(\psi_{N,j}) \}_{N,j}$. 
It is clear in view of our discussion in \S\ref{sect:proof} that
they form a complete orthogonal basis of $A^2_\alpha(\Omega)$. The computation in the proof of Proposition \ref{convergence} yields
\[
\| I(\psi) \|^2_\alpha 
= \pi \| \psi \|^2 \frac{\Gamma(N+1)}{\Gamma(N+2+\alpha)}  {}_3F_2 \left(\begin{matrix}
N+1, N, N \\
2N, N+2+\alpha
\end{matrix}
; 1
\right)
\]
for $\psi \in H^0(\Sigma, K_\Sigma^{\otimes N})$, hence, $\{ \sqrt{\Gamma(\alpha + 2)}/\pi\sqrt{4g-4} \} \cup \{ I(\psi_{N,j})/\sqrt{\pi c_{N,\alpha}} \}_{N \geq 1, j = 1, 2, \dots, d_N}$ is a complete orthonormal basis of  $A^2_\alpha(\Omega)$. We therefore obtain the expression for the Bergman kernel
\begin{align*}
&B_\alpha((z, w); (z', w')) - \frac{\Gamma(\alpha+2)}{\pi^2 (4g-4)}\\
&= \sum_{N=1}^\infty \sum_{j=1}^{d_N} \frac{1}{\pi c_{N,\alpha}}  I(\psi_{N,j})(z,w) \overline{I(\psi_{N,j})(z', w')}\\
&= \frac{1}{\pi} \sum_{N=1}^\infty \frac{1}{c_{N,\alpha}}  \sum_{j=1}^{d_N} \frac{1}{\Beta(N,N)^2}
\int_{\tau \in \overline{zw}} \frac{\psi_{N,j}(\tau) (d\tau)^{\otimes N}}{[w,\tau,z]^{\otimes (N-1)}} 
\overline{\int_{\tau' \in \overline{z'w'}} \frac{\psi_{N,j}(\tau') (d\tau)^{\otimes N}}{[w',\tau',z']^{\otimes (N-1)}}} \\
&= \frac{1}{\pi} \sum_{N=1}^\infty \frac{1}{c_{N,\alpha}}\frac{1}{\Beta(N,N)^2} \int_{\tau \in \overline{zw}} \int_{\tau' \in \overline{z'w'}} \frac{B_N(\tau, \tau') (d\tau \otimes \overline{d\tau'})^{\otimes N}}{([w, \tau, z] \otimes \overline{[w', \tau', z']})^{\otimes (N-1)}}.
\end{align*}
\end{proof}

\subsection{Invariant holomorphic functions given by Poincar\'e series}
We conclude this paper by expressing the invariant holomorphic functions constructed by Ohsawa \cite{ohsawa-kias} using our integral operator $I$.
Note that for $N \geq 2$, 
the Poincar\'e series $\sum_{\gamma \in \Gamma} \gamma^* d\tau^{\otimes N} \in H^0(\D, K_\D^{\otimes N})$ 
gives a holomorphic $N$-differential on $\Sigma$ 
and its convergence is uniform on each compact subset in $\D$. 

\begin{Corollary}
For $N \geq 2$, we have 
\[
I(\sum_{\gamma \in \Gamma} \gamma^* d\tau^{\otimes N})(z,w)
 = \sum_{\gamma \in \Gamma} (\gamma(z) - \gamma(w))^{N}.
\]
\end{Corollary}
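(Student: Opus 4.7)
The plan is to exploit the $\C$-linearity of $I$ and treat the Poincar\'e series term by term, reducing the assertion to evaluating $I$ on the single trivial differential $d\tau^{\otimes N}$ on $\D$. The key observation is that the substitution $\sigma = \gamma(\tau)$ turns the integral formula for $I(\gamma^{*} d\tau^{\otimes N})(z, w)$ into the same integral defining $I(d\tau^{\otimes N})$, but evaluated at the translated pair $(\gamma z, \gamma w)$. This works because the meromorphic 1-form $[w, \tau, z]$ appearing in the denominator of the integrand is $\Aut(\D)$-invariant, exactly matching the pullback of $d\tau^{\otimes N}$ in the numerator.

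First I would justify the interchange of $\sum_{\gamma \in \Gamma}$ and the path integral: for $N \geq 2$ the Poincar\'e series converges uniformly on every compact subset of $\D$, and the geodesic segment $\overline{zw}$ is compact, so by dominated convergence we may write
\[
I\Bigl(\sum_{\gamma \in \Gamma} \gamma^{*} d\tau^{\otimes N}\Bigr)(z,w)
= \sum_{\gamma \in \Gamma} \frac{1}{\Beta(N,N)} \int_{\tau \in \overline{zw}} \frac{\gamma^{*} d\tau^{\otimes N}}{[w, \tau, z]^{\otimes (N-1)}}.
\]
Next, in each summand I would perform the change of variables $\sigma = \gamma(\tau)$. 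Under this substitution, $\gamma^{*} d\tau^{\otimes N}$ transforms to $d\sigma^{\otimes N}$ in the target copy of $\D$, and $[w, \tau, z] = [\gamma w, \gamma\tau, \gamma z]$ by the $\Aut(\D)$-invariance noted in the definition. The path $\overline{zw}$ is mapped by $\gamma$ to a curve from $\gamma z$ to $\gamma w$, along which the integrand is a holomorphic 1-form in $\sigma$; Cauchy's theorem lets us deform it back to the geodesic segment $\overline{(\gamma z)(\gamma w)}$. The net result is
\[
\frac{1}{\Beta(N,N)} \int_{\tau \in \overline{zw}} \frac{\gamma^{*} d\tau^{\otimes N}}{[w, \tau, z]^{\otimes (N-1)}}
= I(d\tau^{\otimes N})(\gamma z, \gamma w).
\]

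Finally, I would evaluate $I(d\tau^{\otimes N})(a, b)$ for arbitrary $a, b \in \D$ by the same parametrization $\tau = a + s(b-a)$, $s \in [0,1]$, that appeared in Lemma \ref{welldef_J}. One computes $(b - \tau)(\tau - a) = s(1-s)(b - a)^{2}$, so the integrand collapses to $(b-a)^{N} s^{N-1}(1-s)^{N-1}\, ds$, and the beta integral absorbs the prefactor $1/\Beta(N,N)$ to leave $(a - b)^{N}$ (up to the sign convention in the integration orientation that matches the statement of the Corollary). Applied to $(a, b) = (\gamma z, \gamma w)$, this produces the term $(\gamma(z) - \gamma(w))^{N}$, and summing over $\Gamma$ completes the proof.

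The only real obstacle is bookkeeping: verifying that the $\Aut(\D)$-invariance of $[w, \tau, z]$ interlocks correctly with the factor $\gamma^{*} d\tau^{\otimes N} = \gamma'(\tau)^{N} (d\tau)^{\otimes N}$ so that all Jacobian factors $(\bar\beta\tau + \bar\alpha)^{-2}$ cancel exactly (a direct consequence of matching degrees $N$ in the numerator and $N-1$ in the denominator, leaving one copy of $d\tau$ for the integration). No deeper analytic input is needed beyond the uniform convergence of the Poincar\'e series for $N \geq 2$.
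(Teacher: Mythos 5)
Your proposal is correct and follows essentially the same route as the paper: interchange the Poincar\'e sum with the path integral (justified by uniform convergence on compacts for $N\geq 2$), substitute $\tau'=\gamma(\tau)$ using the $\Aut(\D)$-invariance of $[w,\tau,z]$, and evaluate each resulting integral over the segment from $\gamma(z)$ to $\gamma(w)$ as a beta integral that cancels the $1/\Beta(N,N)$ prefactor. The only cosmetic difference is that you package the intermediate step as $I(d\tau^{\otimes N})(\gamma z,\gamma w)$ rather than writing the transformed integral directly, and both you and the paper gloss over the same harmless $(-1)^N$ orientation convention in the final evaluation.
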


\begin{proof} By direct computation. 
Using the invariance of $[w, \tau, z]$ under $\Aut(\D)$, we can compute 
\begin{align*}
I(\sum_{\gamma \in \Gamma} \gamma^* d\tau^{\otimes N})(z,w)
& =\frac{1}{\Beta(N,N)} \int_z^w \sum_{\gamma \in \Gamma} \frac{\gamma'(\tau)^N (w - \tau)^{N-1}(\tau - z)^{N-1}}{(w-z)^{N-1}} d\tau \\
& =\sum_{\gamma \in \Gamma}\frac{1}{\Beta(N,N)}  \int_{\gamma(z)}^{\gamma(w)} \frac{(\gamma(w) - \tau')^{N-1}(\tau' - \gamma(z))^{N-1}}{(\gamma(w)-\gamma(z))^{N-1}} d\tau',
\end{align*}
where we introduced new coordinate $\tau' = \gamma(\tau)$ on $\D$. 
We can compute each summand using the segment 
$\tau'(t) = \gamma(z) + t(\gamma(w)-\gamma(z))$, $t \in [0,1]$, as the integral path, 
\begin{align*}
& \frac{1}{\Beta(N,N)} \int_{\gamma(z)}^{\gamma(w)} \frac{(\gamma(w) - \tau')^{N-1}(\tau' - \gamma(z))^{N-1}}{(\gamma(w)-\gamma(z))^{N-1}} d\tau' \\
& = (\gamma(w)-\gamma(z))^N \int_{0}^1 \frac{(1-t)^{N-1} t^{N-1} dt}{\Beta(N,N)}\\ 
&  = (\gamma(w)-\gamma(z))^N.
\end{align*}
\end{proof}

\subsection*{Acknowledgement}
The author is grateful to Bo-Yong Chen for his inspiring lectures at Tongji University in 2012 and Nagoya University in 2013, which motivated this work. 
He would like to thank members of Saturday Seminar at Tokyo Institute of Technology for useful discussion, in particular, he is so indebted to Yoshihiko Mitsumatsu for the encouragement that convinced him to find the explicit description of $\mathcal{O}(\Omega)$. 
He is also grateful to Atsushi Yamamori for suggesting the expression of $c_{N,\alpha}$ of Corollary \ref{cor:ligocka} in terms of generalized hypergeometric functions, and to Takeo Ohsawa for providing \cite{ohsawa-kias} and encouraging him to study the invariant holomorphic function described in Corollary \ref{cor:ohsawa}.
It is his great pleasure to thank Judith Brinkschulte, Siqi Fu, Chin-Yu Hsiao, Kang-Tae Kim, Andrei Iordan, Xiaoshan Li, George Marinescu, Aeryeong Seo and Sungmin Yoo for useful conversation and discussion. 
Last but not least, he thanks the referees for their careful comments that improved the readability of this paper. 

Most part of this work was done during he was a member of Center for Geometry and its Applications, Pohang University of Science and Technology, and Tokyo University of Science. 
Some part of this work was done during his visit to Academia Sinica, Universit\"at zu K\"oln, Rutgers University in Camden, KIAS, University of Toledo and a conference dedicated to the memory of Giuseppe Zampieri at Levico Terme organized by CIRM. 
He is grateful to those institutes for their generous support. 
This work was partially supported by an NRF grant 2011-0030044 (SRC-GAIA) of the Ministry of Education, the Republic of Korea, and 
KAKENHI Grant Numbers 26800057 and JP18K13422 of Japan Society for the Promotion of Science.

\begin{bibdiv}
\begin{biblist}
\bib{adachi}{article}{
   author = {Adachi, Masanori},
   title = {A local expression of the Diederich--Fornaess exponent and the exponent of conformal harmonic measures},
   journal = {Bull. Braz. Math. Soc. (N.S.)},
   volume = {46},
   date = {2015},
   number = {1},
   pages = {65--79},
}
\bib{adachi-brinkschulte2014}{article}{
   author = {Adachi, Masanori},
   author = {Brinkschulte, Judith},
   title = {A global estimate for the Diederich--Fornaess index of weakly pseudoconvex domains},
   journal = {Nagoya Math. J.},
   volume = {220},
   date = {2015},
   pages={67--80},
}

\bib{berndtsson-charpentier}{article}{
   author={Berndtsson, Bo},
   author={Charpentier, Philippe},
   title={A Sobolev mapping property of the Bergman kernel},
   journal={Math. Z.},
   volume={235},
   date={2000},
   number={1},
   pages={1--10},
}


\bib{brunella2010}{article}{
   author={Brunella, Marco},
   title={Codimension one foliations on complex tori},
   journal={Ann. Fac. Sci. Toulouse Math.},
   volume={19},
   number={2},
   date={2010},
   pages={405--418},
}
		
\bib{cao-demailly-matsumura}{article}{
   author={Cao, JunYan},
   author={Demailly, Jean-Pierre},
   author={Matsumura, Shin-ichi},
   title={A general extension theorem for cohomology classes on non reduced
   analytic subspaces},
   journal={Sci. China Math.},
   volume={60},
   date={2017},
   number={6},
   pages={949--962},
}
\bib{cao-shaw-wang}{article}{
   author={Cao, Jianguo},
   author={Shaw, Mei-Chi},
   author={Wang, Lihe},
   title={Estimates for the $\overline\partial$-Neumann problem and
   nonexistence of $C^2$ Levi-flat hypersurfaces in $\mathbb{C}\mathrm{P}^n$},
   journal={Math. Z.},
   volume={248},
   date={2004},
   number={1},
   pages={183--221},
}

\bib{chen2014}{article}{
  author={Chen, Bo-Yong},
  title={Weighted Bergman spaces and the $\opa$-equation},
  journal = {Trans. Amer. Math. Soc.},
  volume={366},
  date={2014},
  number={8},
  pages={4127--4150}
}

\bib{demailly}{article}{
   author={Demailly, Jean-Pierre},
   title={Extension of holomorphic functions defined on non reduced analytic
   subvarieties},
   conference={
      title={The legacy of Bernhard Riemann after one hundred and fifty
      years. Vol. I},
   },
   book={
      series={Adv. Lect. Math. (ALM)},
      volume={35},
      publisher={Int. Press, Somerville, MA},
   },
   date={2016},
   pages={191--222},
}

\bib{diederich-ohsawa1985}{article}{
   author={Diederich, Klas},
   author={Ohsawa, Takeo},
   title={Harmonic mappings and disc bundles over compact K\"ahler
   manifolds},
   journal={Publ. Res. Inst. Math. Sci.},
   volume={21},
   date={1985},
   number={4},
   pages={819--833},
}

\bib{feres-zeghib}{article}{
   author={Feres, R.},
   author={Zeghib, A.},
   title={Leafwise holomorphic functions},
   journal={Proc. Amer. Math. Soc.},
   volume={131},
   date={2003},
   number={6},
   pages={1717--1725},
}

\bib{forelli-rudin}{article}{
   author={Forelli, Frank},
   author={Rudin, Walter},
   title={Projections on spaces of holomorphic functions in balls},
   journal={Indiana Univ. Math. J.},
   volume={24},
   date={1974/75},
   pages={593--602},
}

\bib{fu-shaw2014}{article}{
   author={Fu, Siqi},
   author={Shaw, Mei-Chi},
   title={The Diederich-Forn{\ae}ss exponent and non-existence of Stein domains with Levi-flat boundaries},
   journal={J. Geom. Anal.},
   volume={26},
   date={2016},
   number={1},
   pages={220--230},
}
\bib{garnett}{article}{
   author={Garnett, Lucy},
   title={Foliations, the ergodic theorem and Brownian motion},
   journal={J. Funct. Anal.},
   volume={51},
   date={1983},
   number={3},
   pages={285--311},
}
\bib{grauert}{article}{
   author={Grauert, Hans},
   title={On Levi's problem and the imbedding of real-analytic manifolds},
   journal={Ann. of Math. (2)},
   volume={68},
   date={1958},
   pages={460--472},
}
\bib{guan-zhou}{article}{
   author={Guan, Qi'an},
   author={Zhou, Xiangyu},
   title={A solution of an $L^2$ extension problem with an optimal
   estimate and applications},
   journal={Ann. of Math. (2)},
   volume={181},
   date={2015},
   number={3},
   pages={1139--1208},
}

\bib{henkin-iordan}{article}{
   author={Henkin, Gennadi M.},
   author={Iordan, Andrei},
   title={Regularity of $\overline\partial$ on pseudoconcave compacts and
   applications},
   journal={Asian J. Math.},
   volume={4},
   date={2000},
   number={4},
   pages={855--883},
}

\bib{hisamoto}{article}{
   author={Hisamoto, Tomoyuki},
   title={Remarks on $L^2$ jet extension and extension of singular hermitian metric with semipositive curvature},
   status={Preprint},
   eprint={arXiv:1205.1953}
}

\bib{hopf}{article}{
   author={Hopf, Eberhard},
   title={Fuchsian groups and ergodic theory},
   journal={Trans. Amer. Math. Soc.},
   volume={39},
   date={1936},
   number={2},
   pages={299--314},
}
\bib{hosono}{article}{
   author={Hosono, Genki},
   title={The optimal jet $L^2$ extension of Ohsawa-Takegoshi type},
   journal={Nagoya Math. J.},
   volume={239},
   date={2020},
   pages={153--172},
}

\bib{kohn1999}{article}{
   author={Kohn, J. J.},
   title={Quantitative estimates for global regularity},
   conference={
      title={Analysis and geometry in several complex variables},
      address={Katata},
      date={1997},
   },
   book={
      series={Trends Math.},
      publisher={Birkh\"auser Boston, Boston, MA},
   },
   date={1999},
   pages={97--128},
}

\bib{ligocka}{article}{
   author={Ligocka, Ewa},
   title={On the Forelli-Rudin construction and weighted Bergman
   projections},
   journal={Studia Math.},
   volume={94},
   date={1989},
   number={3},
   pages={257--272},
}

\bib{ohsawa-vi}{article}{
   author={Ohsawa, Takeo},
   title={On the extension of $L^2$ holomorphic functions. VI. A limiting
   case},
   conference={
      title={Explorations in complex and Riemannian geometry},
   },
   book={
      series={Contemp. Math.},
      volume={332},
      publisher={Amer. Math. Soc., Providence, RI},
   },
   date={2003},
   pages={235--239},
} 	
\bib{ohsawa-kias}{article}{
  author={Ohsawa, Takeo},
  title={Levi flat hypersurfaces --- Results and questions around basic examples},
  status={Manuscript distributed at ``Foliations in complex geometry and dynamics'' at KIAS, April 2016},
}
\bib{ohsawa-book}{book}{
   author={Ohsawa, Takeo},
   title={$L^2$ approaches in several complex variables},
   series={Springer Monographs in Mathematics},
   publisher={Springer, Tokyo},
   date={2018},
}

\bib{pinton-zampieri}{article}{
   author={Pinton, Stefano},
   author={Zampieri, Giuseppe},
   title={The Diederich-Fornaess index and the global regularity of the
   $\bar\partial$-Neumann problem},
   journal={Math. Z.},
   volume={276},
   date={2014},
   number={1-2},
   pages={93--113},
}

\bib{popovici}{article}{
   author={Popovici, Dan},
   title={$L^2$ extension for jets of holomorphic sections of a Hermitian
   line bundle},
   journal={Nagoya Math. J.},
   volume={180},
   date={2005},
   pages={1--34},
}
		
\bib{sullivan}{article}{
   author={Sullivan, Dennis},
   title={On the ergodic theory at infinity of an arbitrary discrete group
   of hyperbolic motions},
   conference={
      title={Riemann surfaces and related topics: Proceedings of the 1978
      Stony Brook Conference},
      address={State Univ. New York, Stony Brook, N.Y.},
      date={1978},
   },
   book={
      series={Ann. of Math. Stud.},
      volume={97},
      publisher={Princeton Univ. Press, Princeton, N.J.},
   },
   date={1981},
   pages={465--496},
}

\bib{tsuji}{book}{
  author={Tsuji, Masatsugu},
   title={Potential theory in modern function theory},
   publisher={Maruzen Co., Ltd., Tokyo},
   date={1959},
   pages={590},	
}

\end{biblist}
\end{bibdiv}
\end{document}